\numberwithin{equation}{section}
\begin{document}
\newcommand{\s}{\vspace{0.2cm}}

\newtheorem{theo}{Theorem}
\newtheorem{prop}[theo]{Proposition}
\newtheorem{coro}[theo]{Corollary}
\newtheorem{lemm}[theo]{Lemma}
\newtheorem{example}[theo]{Example}
\theoremstyle{remark}
\newtheorem{rema}[theo]{\bf Remark}
\newtheorem{defi}[theo]{\bf Definition}
\newcommand{\rac}{{\mathbb{Q}}}
\newcommand{\comp}{{\mathbb{C}}}
\newcommand{\hip}{{\mathbb{H}}}
\newcommand{\hola}{{\overline{\rac}}}

\title[Families, Uniformization and Arithmeticity]{Families of Riemann Surfaces,\\ Uniformization and Arithmeticity}
\author{Gabino Gonz\'alez-Diez}
\address{Departamento de Matem\'aticas, Universidad Aut\'onoma de Madrid, Madrid, Spain.}
\email{gabino.gonzalez@uam.es}
\author{Sebasti\'an Reyes-Carocca}
\address{Departamento de Matem\'aticas y Estad\'istica, Universidad de La Frontera, Temuco, Chile.}
\email{sebastian.reyes@ufrontera.cl}

\thanks{Both authors were partially supported by Spanish MEyC Grant MTM 2012-31973. The second author was also partially supported   by Becas Chile, Universidad de La Frontera, Fondecyt
Postdoctoral Project \#3160002 and Fondecyt Anilo Project ACT 1415 PIA Conicyt.}

\keywords{Holomorphic families of Riemann surfaces, Complex surfaces and their universal covers, Fields of definition, Holomorphic motions}
\subjclass[2010]{32G15, 14J20, 14J29}

\begin{abstract}
A consequence of the results of Bers and Griffiths on the uniformization of complex algebraic varieties  is that
the universal cover  of
a family of Riemann surfaces,   with base and fibers of finite hyperbolic type,
is  a  contractible $2-$dimensional domain that can be realized as   the graph of  a holomorphic motion of the unit disk.

In this paper we   determine which holomorphic motions
give rise to these uniformizing domains
and characterize  which among them correspond to arithmetic families (i.e. families defined over number fields).
Then we apply these results to
characterize the arithmeticity of
complex surfaces of general type
in terms of the biholomorphism class of the  $2-$dimensional domains that arise as universal covers of their
Zariski open subsets. For the important class of Kodaira fibrations this criterion
implies that arithmeticity  can be read off from the universal cover.
 All this is very much in contrast with the corresponding situation in complex dimension one, where the universal cover is always the unit disk.
\end{abstract}
\maketitle

\section{Introduction and Statement of the main Results} Let $V \to C$ be a holomorphic family of Riemann surfaces over a Riemann surface $C$ such that the base  and the fibers are of finite hyperbolic type. Bers-Griffiths' Uniformization Theorem implies that the holomorphic universal cover (from now on, simply   the universal cover) of $V$ is a
 contractible bounded domain   $\mathscr{B} \subset \comp^2$ isomorphic to the graph of (the extension to the unit disk of) a  holomorphic motion of the unit circle $\Delta \times \mathbb{S}^1 \to  \comp.$ (Domains of this kind are termed Bergman domains in Bers' article   \cite[p. 284]{Bers}).

In this paper we characterize the   domains
 $\mathscr{B}$
that arise as universal covers of families of Riemann surfaces as above (Theorem \ref{theo0}). Moreover, we provide a criterion (Theorem \ref{coro})
   to recognize which among them   correspond to
  arithmetic families, that is, families defined over a number field (see Subsection \ref{ari} for the precise definition).

While it is classically known that the universal cover of a Riemann surface is isomorphic to the Riemann sphere $\overline{\comp},$ the complex plane $\comp$ or the unit disk $\Delta,$ there seems to be  a huge amount of possibilities for the  universal cover of a complex surface. In the final part of this paper we apply the aforementioned results to show that
 the arithmeticity (that is, the property of being definable over a number field) of a minimal complex  projective surface of general type $S$ depends only on whether or not it contains a Zariski open subset $U\subset S$ whose universal cover is isomorphic to  a contractible bounded domain  of a certain biholomorphic type
  (Theorem \ref{aritmetica}).

When $S$ is a Kodaira fibration, in Theorem \ref{aritmetica} one can specify $U= S$. This fact allows us   to   characterize the arithmeticity of  Kodaira fibrations
in terms of the isomorphism class of their universal covers
  (Theorem \ref{aritmeticaK}). In turn, Theorem \ref{aritmeticaK}  strengthens   the main result in \cite{Koda1} which says that two Kodaira fibrations with isomorphic universal covers are simultaneously
arithmetic or non-arithmetic.


In conclusion we remark that in this respect projective surfaces differ radically from projective curves for which the biholomorphism class of the  universal cover is a topological invariant.

\subsection{The group sequence associated to a holomorphic motion   of $\mathbb{S}^1$.} \label{BBA}

Let $E \subset \overline{\comp}$
be a subset of cardinality $\geq 3$. A {\it holomorphic motion} of $E$  is a function $W : \Delta \times E \to \overline{\comp}$ satisfying the following properties:
\begin{enumerate}
\item[(a)] $W(0, s)=s$ for all $s \in E.$
\item[(b)] $t \mapsto W(t,s)$ is holomorphic for all $s \in E.$
\item[(c)] $s \mapsto W_t(s):=W(t,s)$ is injective for all $t \in \Delta.$
\end{enumerate}

Holomorphic motions were introduced by Ma\~{n}\'e, Sad and Sullivan in \cite{MSS}. They proved the first important result in the topic, the so-called {\it $\lambda-$lemma}, which says that $W$ is actually a continuous map and that, moreover, the functions $W_t$ are quasiconformal. Probably the deepest  result in the subject is Slodkowski's Theorem \cite{SL} which establishes that each holomorphic motion of $E$   admits an extension to a holomorphic motion of the whole Riemann sphere. We refer to the survey articles \cite{Gardiner}  and \cite{mitra} for more information on this topic.

We will be concerned with holomorphic motions of the  unit circle
 $W: \Delta \times \mathbb{S}^1 \to \comp$ and their extensions to
 the unit disk $W: 	\Delta \times \Delta \to \comp$.
  An obvious consequence of the  $\lambda-$lemma and Slodkowski's Theorem is that a holomorphic motion $W$ of $\Delta$ determines a unique holomorphic motion of $\mathbb{S}^1 $ which we will still denote by $W$.

The {\it graph of $W$}  is the domain
$$ \mathscr{B}_W
	:=\{(t,W(t,z)) : t, z \in \Delta \}.$$
	
	We observe that each of the {\it quasidisks}
	$$
	D_t:=\{W(t,z): z \in \Delta \}
	$$
	 is independent of
   the choice of the extension of $W$ to $\Delta$
	and therefore so must be
	 the domain $ \mathscr{B}_W$. Now the property $(b)$ and the $\lambda-$lemma together imply that the projection to the first coordinate $\mathscr{B}_W \to \Delta$
	makes of $\mathscr{B}_W$   a  family of quasidisks (see  the precise definition   in the Subsection \ref{DefFamily} below)
	  whose fiber over each $t \in \Delta$ is precisely $D_t.$

	If   $\pi:\mathscr{B} \to \Delta$ (also written
	$(\mathscr{B}, \pi)$) is a family of quasidisks
	we denote by $\mbox{Aut}(\mathscr{B})$ the group of biholomorphic automorphisms of $\mathscr{B}$ and by $\mbox{Aut}_{\pi}(\mathscr{B})$ the subgroup consisting of the fiber preserving ones (i.e. those that map each $D_t$ into another fiber $D_{t'}$). Each $\varphi \in \mbox{Aut}_{\pi}(\mathscr{B})$ induces an automorphism $\hat{\varphi}$ of the unit disk given by the rule $D_t \mapsto D_{\hat{\varphi}(t)}:=\varphi(D_t).$ The correspondence $\varphi \mapsto \hat{\varphi}$ defines a homomorphism $\Theta :\mbox{Aut}_{\pi}(\mathscr{B}) \to \mbox{Aut}(\Delta)$ which induces an obvious exact sequence of groups
	
	\[ \xymatrix {
  1 \ar[r] &
  \mathbb{K}_{\pi} \ar[r] &
  \mbox{Aut}_{\pi}(\mathscr{B}) \ar[r]^{\,\,\,\,\,\,\,\,\,\,\,\Theta} &
  \Gamma_{\pi} \ar[r] &
  1
} \]

The restriction of  $\mathbb{K}_{\pi}$ to each fiber $D_t$ induces a group homomorphism
	\begin{equation} \label{restriction}
	\Phi_t : \mathbb{K}_{\pi} \to \mbox{Aut}(D_t)
		\end{equation}
which by  a result of Earle and Marden
  \cite[Corollary 5.1]{EA} is injective.
    We shall denote by $K_{\pi}^t$ the image of $\Phi_t.$ For the particular case $t=0,$ the monomorphism
     $$\Phi_0 : \mathbb{K}_{\pi} \to \mbox{Aut}(D_0)$$
     maps $\mathbb{K}_{\pi}$ isomorphically onto a subgroup $K_{\pi}^0$ of $\mbox{Aut}(\Delta)$ which we shall denote simply  by $K_{\pi}.$ Obviously this fact allows us to rewrite the above group sequence in the following    equivalent form
	\[ \xymatrix {
  1 \ar[r] &
  K_{\pi} \ar[r] &
  \mbox{Aut}_{\pi}(\mathscr{B}) \ar[r]^{ \,\, \,\,\Theta} &
  \Gamma_{\pi} \ar[r] &
  1.
} \]	
 In the case in which  $(\mathscr{B}, \pi)$ is the family induced by a holomorphic motion $W$ we put  $K_{W}^t$ instead of $K_{\pi}^t$ and write the previous sequence as
\[ \xymatrix {
  1 \ar[r] &
  K_{W} \ar[r] &
  \mbox{Aut}_{\pi}(\mathscr{B}_{W}) \ar[r]^{\,\,\,\,\,\,\,\,\,\,\,\Theta} &
  \Gamma_{W} \ar[r] &
  1.
} \] We refer to  this sequence as the  {\it sequence associated} to the holomorphic motion $W$
and to the
 subgroups   $K_{W}$ and   $\Gamma_{W} $ of $\mbox{Aut}(\Delta)$ as
the {\it base} and the {\it fiber groups associated} to $W.$

We will say that     a
	holomorphic motion $W$  {\it is trivial} if
	 $\mathscr{B}_{W}$ is isomorphic to the bidisk $\Delta \times \Delta$.

In order to state our results we will also need to bring in the concept of equivariancy. Let $K$ be a group of M\"{o}bius transformations that leaves $\mathbb{S}^1$ invariant. A holomorphic motion $W: \Delta \times \mathbb{S}^1 \to \overline{\comp}$ is called $K-${\it equivariant} if for each $t \in \Delta$ there is a group homomorphism
$$
X_t:K \to \mathbb{P}\mbox{SL}(2, \mathbb{C})
$$
such that
for every $\kappa \in K$ and every  the  $s \in \mathbb{S}^1$  the following identity is satisfied
$$W(t, \kappa(s))=X_t(\kappa)(W(t,s)).$$

For later use we record here the fact, proved by
    Earle, Kra and Krushkal (see \cite[Theorem 1]{Earle}),    that if a holomorphic motion is $K-$equivariant then there exists such an extension to $\overline{\comp}$ which is $K-$equivariant as well.

\subsection{Holomorphic motions and families of Riemann surfaces.}\label{DefFamily}
 Let $V$ be a complex surface (that is, a two-dimensional complex analytic manifold) and $C$ be a Riemann surface. Following Hubbard (\cite[Section 6.2]{Hubbard}; see also \cite{EA}) a
holomorphic map $f: V \to C$ will be  called a {\it holomorphic family of Riemann surfaces}   if
the following conditions are satisfied:
\begin{enumerate}
\item[(a)] $f$ is everywhere of maximal rank, so that the fibers
$ V_c:=f^{-1}(c), c\in C,$ are Riemann surfaces.
\item[(b)] $f$ locally admits {\it horizontally  holomorphic trivializations}.
\end{enumerate}

Condition $(b)$ means that for every $c_0\in C$ there is a neighborhood  $B$ of $c_0$ and a homeomorphism
 $\theta:B\times V_{c_0} \to f^{-1}(B) $ commuting with the projection to $B,$ such that for every $x\in  V_{c_0}$ the map $c\to \theta(c,x)$ is holomorphic. This condition may look unnatural at first sight, but note that it rules out unwanted families such as the one whose fiber over each $z\in \Delta$ is $ \comp \setminus \{0,1,-1,-1+\overline{z}  \}$
 (\cite[Example 6.2.10]{Hubbard}).

It should be observed that, if the fibers $V_c$ are compact,
   condition (b) is automatically satisfied and that, if the fibers are Riemann surfaces of finite type $(g,n)$, i.e. with genus $g$ and $n$ punctures, then
  $f: V \to C$ is the restriction of a family $ V^+\to C$ with compact fibers to the complement of the images of
  $n$  disjoint   holomorphic sections $s_1, \dots, s_n:C \to V^+$
   (\cite[Example 6.2.10]{Hubbard}; c.f. \cite[p. 346]{Nag}).

  We  will reserve the expression {\it algebraic  family of Riemann surfaces}
  for families of Riemann surfaces of finite hyperbolic type (i.e. such that  $2g-2+n>0$) whose base $C$ is also of finite hyperbolic type.
An algebraic family is called {\it isotrivial}
if all its fibers are isomorphic Riemann surfaces.
We anticipate  that algebraic families give rise to algebraic surfaces  (see Section \ref{s12}).

We observe that the graph of a holomorphic motion $W$  of $\mathbb{S}^1$  endowed with the natural projection to the first coordinate  is a family of quasidisks; the required horizontally holomorphic trivializations being provided by  $W$.

We now make the following definitions.
\begin{defi}
A family of quasidisks  $(\mathscr{B}, \pi)$ will be said to be the  {\it the universal cover of an algebraic   family of Riemann surfaces}  $f: V \to C$ if
there is   a commutative diagram
$$
\begin{tikzpicture}[node distance=3.1 cm, auto]
  \node (P) {$\mathscr{B}$};
  \node (Q) [right of=P] {$V$};
  \node (A) [below of=P, node distance=1.2 cm] {$\Delta$};
  \node (C) [below of=Q, node distance=1.2 cm] {$C$};
  \draw[->] (P) to node {} (Q);
  \draw[->] (A) to node {} (C);
  \draw[->] (P) to node [swap] {$\pi$} (A);
    \draw[->] (Q) to node {${f}$} (C);
\end{tikzpicture}
$$
such that the horizontal arrows are universal covering maps.
\end{defi}

\begin{defi}
By a  {\it  Bers-Griffiths domain} \label{definition2}
we will refer to a domain $\mathscr{B} \subset {\comp}^2$
which is the graph of a  non-trivial holomorphic motion $W$  that satisfies  the following
properties:
\begin{enumerate}
\item[(a)] The base and fiber groups $K_{W}$ and $\Gamma_{W}$ are Fuchsian groups of finite hyperbolic type.
\item[(b)]  $W$  is $K$-equivariant, for some finite index subgroup $K$ of $K_{W}.$
\end{enumerate}

 Accordingly, by a {\it   Bers-Griffiths family} (of quasidisks) we will refer to   a Bers-Griffiths domain  endowed with the projection to the first coordinate.
\end{defi}

\begin{theo} \label{theo0}
A family of quasidisks  $\pi:\mathscr{B} \to \Delta$ is the universal cover of a non-trivial algebraic family of Riemann surfaces if and only if it is isomorphic to
a Bers-Griffiths family.
\end{theo}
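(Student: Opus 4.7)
The plan is to prove each implication separately. The key ingredients are the Bers--Griffiths uniformization theorem, the identification of the universal cover with the graph of a holomorphic motion already recalled in the introduction, the Earle--Kra--Krushkal equivariant extension theorem, and the Earle--Marden injectivity of $\Phi_0$.

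For the necessity direction, I would begin with a non-trivial algebraic family $f:V\to C$ and apply Bers--Griffiths uniformization to realize the universal cover of $V$ as a bounded contractible domain $\mathscr{B}\subset\mathbb{C}^2$. Lifting $f$ through the universal cover $\Delta\to C$ makes $\mathscr{B}$ a family of quasidisks over $\Delta$, which is therefore the graph $\mathscr{B}_W$ of a holomorphic motion $W$ of $\mathbb{S}^1$. The deck group of $\mathscr{B}\to V$ embeds $\pi_1(V)$ in $\mbox{Aut}_\pi(\mathscr{B}_W)$, and the homotopy exact sequence
\[
1\to \pi_1(V_{c_0})\to \pi_1(V)\to \pi_1(C)\to 1
\]
of the fibration identifies $\Phi_0(\pi_1(V_{c_0}))\subset K_W$ and $\Theta(\pi_1(V))\subset\Gamma_W$ with the classical Fuchsian uniformizing groups of $V_{c_0}$ and $C$ respectively. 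Because $\mathscr{B}_W$ is a bounded domain, $\mbox{Aut}(\mathscr{B}_W)$ acts properly discontinuously; restricting to $D_0$ one deduces that $K_W$ acts on $\Delta$ with discrete orbits, hence is Fuchsian, and since it contains the cofinite Fuchsian subgroup $\pi_1(V_{c_0})$ it is itself of finite hyperbolic type. The same argument applied through $\Theta$ gives the analogous conclusion for $\Gamma_W$. Finally, the $K$-equivariance of $W$, with $K$ the image of $\pi_1(V_{c_0})$ under $\Phi_0$, is the concrete statement that $\pi_1(V)$ acts on each fiber by the M\"obius transformations $\Phi_t$.

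For the sufficiency direction, I would start with a Bers--Griffiths family $(\mathscr{B}_W,\pi)$. The Earle--Kra--Krushkal theorem extends $W$ to a $K$-equivariant holomorphic motion of $\overline{\mathbb{C}}$, so each element of $K$ produces a fibre-fixing biholomorphism of $\mathscr{B}_W$; together with the exact sequence recalled in the introduction this exhibits $\mbox{Aut}_\pi(\mathscr{B}_W)$ as an extension of a Fuchsian group of finite type by a Fuchsian group of finite type, hence as a finitely generated group. By Selberg's lemma one extracts a torsion-free finite-index subgroup $\tilde G\subset\mbox{Aut}_\pi(\mathscr{B}_W)$, and sets $\tilde\Gamma:=\Theta(\tilde G)$ and $\tilde K:=\tilde G\cap\mathbb K_\pi$. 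Proper discontinuity of $\mbox{Aut}(\mathscr{B}_W)$ on the bounded domain $\mathscr{B}_W$ combined with torsion-freeness of $\tilde G$ yields a free properly discontinuous action, so $V:=\mathscr{B}_W/\tilde G$ is a complex surface fibered by $f:V\to C:=\Delta/\tilde\Gamma$, with fibers $D_{t_0}/\Phi_{t_0}(\tilde K)$, which are Riemann surfaces of finite hyperbolic type. A horizontal holomorphic trivialization near $c_0=[t_0]$ is provided directly by the motion through $(t,w)\mapsto(t,W(t,W_{t_0}^{-1}(w)))$ on a small contractible neighborhood $B\subset C$, and this descends to the required trivialization of $V$. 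Non-triviality of $f$ follows from the hypothesis $\mathscr{B}_W\not\cong\Delta\times\Delta$.

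The main obstacle is the discreteness/finite-generation argument: in the necessity direction, deducing that $K_W$ and $\Gamma_W$ are Fuchsian of finite type from the mere fact that they contain cofinite Fuchsian subgroups, which forces one to exploit the bounded-domain structure of $\mathscr{B}_W$; and in the sufficiency direction, arranging a torsion-free finite-index subgroup of $\mbox{Aut}_\pi(\mathscr{B}_W)$ whose two Fuchsian constituents assemble correctly into the fundamental group of a genuine algebraic family. The $K$-equivariance hypothesis is precisely the technical input that makes this assembly possible.
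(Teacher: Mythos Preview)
Your outline has the right architecture, but both directions rest on a claim that is simply false: that $\mbox{Aut}(\mathscr{B}_W)$ acts properly discontinuously ``because $\mathscr{B}_W$ is a bounded domain.'' Bounded domains can have non-discrete automorphism groups (the ball and the bidisk are the obvious examples), so this step carries no weight. In the necessity direction the paper does not argue this way at all: it invokes Shabat's theorem, which says that for the universal cover of a \emph{non-isotrivial} algebraic family the full automorphism group is discrete and contains the deck group with finite index. That finite-index statement is exactly what you need to pass from ``$K_W$ contains the cofinite Fuchsian group $\pi_1(V_{c_0})$'' to ``$K_W$ is itself Fuchsian of finite type,'' and likewise for $\Gamma_W$. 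Your proposed route (``restricting to $D_0$ one deduces discrete orbits'') presupposes the discreteness you are trying to establish.

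In the sufficiency direction the gap is more serious, because here you do not yet know that $\mathscr{B}_W$ covers anything, so Shabat is unavailable. The paper therefore proves proper discontinuity of the relevant subgroup $\mathbb{G}'$ \emph{directly}, using the $K$-equivariance to identify $K^t$ with $W_tKW_t^{-1}$ and then exploiting that $K$ and $\Gamma'$ are Fuchsian; the $\lambda$-lemma and invariance of domain are needed to make this work. There is a second issue: you apply Selberg's lemma to $\mbox{Aut}_\pi(\mathscr{B}_W)$, but Selberg requires a finitely generated \emph{linear} group, and linearity of this extension is not obvious. The paper instead applies Selberg only to the Fuchsian quotient $\Gamma$, and then removes torsion from the kernel by a separate residual-finiteness argument (Johnson, or Grunewald--Jaikin-Zapirain--Zalesskii), together with Lemma~\ref{andrei} to arrange that the equivariance subgroup $K$ actually arises as $\mathbb{G}\cap K_W$ for a finite-index $\mathbb{G}$. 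Your sentence ``the $K$-equivariance hypothesis is precisely the technical input that makes this assembly possible'' is correct in spirit, but the assembly itself is the content of the proof and you have not carried it out.
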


This theorem will be proved in Section \ref{MH}. In that section we will also prove  that
the holomorphic motion of which a Bers-Griffiths domain $\mathscr{B}$ is the graph is uniquely determined by $\mathscr{B}$  (Proposition \ref{uniquely}).
 This uniqueness property, whose proof will be an easy
 consequence of work   of Earle and Fowler \cite{ERF},  will allow us to easily produce examples of holomorphic motions whose
  graphs    are not   Bers-Griffiths domains (Example  \ref{counterexample}).

\subsection{Holomorphic motions and arithmetic families of Riemann surfaces} \label{ari}


Let us denote by $\mbox{Gal}(\comp)$  the group of field automorphisms of $\comp.$ Let $X \subset \mathbb{P}^n$ be a projective variety and $\sigma \in \mbox{Gal}(\comp).$ We shall denote by $X^{\sigma}$ the projective variety defined by the polynomials obtained after applying $\sigma$ to the coefficients of the homogeneous   polynomials which define $X.$

Let $k$ be a subfield of $\comp$ and $\mbox{Gal}(\comp/k)$ be the subgroup of $\mbox{Gal}(\comp)$ consisting of those automorphisms which fix the elements in $k.$ We shall say that $X$ is defined over $k$ if $X=X^{\sigma}$ for all $\sigma \in \mbox{Gal}(\comp/k).$ We shall say that $X$ can be defined over $k$ (or that {\it $k$ is a field of definition for $X$}) if there exists an isomorphism $\Phi : X \to Y$ into a projective variety $Y \subset \mathbb{P}^m$ which is defined over $k.$ Let $U$ be
a Zariski open subset of $X.$ We will say that the quasiprojective subvariety $U \subset X$ is defined over $k$ if both $X$ and the Zariski closed set $X \setminus U$ are defined over $k.$ We will say that $U$ can be defined over $k$ if there exists an isomorphism $\Phi : X \to Y$ into a projective variety $Y$ in such a way that $ \Phi(U)$ is a Zariski open subset of $Y$  defined over $k.$

Likewise, we shall say that a regular map $f$ between quasiprojective varieties is defined over $k$ if $f^{\sigma}=f$ for all $\sigma \in \mbox{Gal}(\comp/k)$ and that $f$ can be defined over $k$ if it is equivalent to a regular map defined over $k.$ Here again $f^{\sigma}$ is the regular map obtained after applying $\sigma$ to the polynomials which locally define $f$ (see for example \cite[p. 34]{Shafa}).

In this paper we are primarily interested in quasiprojective varieties and morphisms which can be defined over numbers fields or, equivalently, over the algebraic closure $\overline{\rac}$ of the field of the rational numbers.
 
\begin{defi}
\begin{enumerate}
\item[(a)] A quasiprojective variety $V$ (resp. a morphism between quasiprojective varieties $f$) will be called {\it arithmetic} if $V$ (resp. $f$) can be defined over a number field.
\item[(b)]  Let  $V$ be a quasiprojective surface. 
 An algebraic family of Riemann surfaces $f: V \to C$ will be called {\it arithmetic}  provided that $V, C$ and $f$ are arithmetic.
\end{enumerate}
\end{defi}
 

%

We anticipate that, as a consequence of Shabat's Theorem (see Section \ref{s2}), the quotient space $\Delta/\Gamma_{W}$ corresponding to any
holomorphic motion $W$ whose graph is a Bers-Griffiths domain
has structure of Riemann orbifold of finite type. We will employ the notation $$\mathcal{O}_{W}=(R; q_1, \ldots, q_m)$$
to refer to this structure;
meaning that its underlying Riemann surface is isomorphic to $R$ and that the universal covering map $\Delta \to R$ ramifies over a set of conic points $\{q_1, \ldots, q_m\}.$ We will say that $\mathcal{O}_{W}$ is an {\it arithmetic orbifold} if both $R$ and the set of conic points are arithmetic. We recall that, by definition, $R$ is arithmetic if both its compactification $\overline{R}$ and the
	finite subset
	  $\overline{R} \setminus R$ are.

    \begin{defi} We will say that a  Bers-Griffiths domain  (and its canonically associated    family of quasidisks)   is of {\it arithmetic type}   if,
  in addition to properties (a) and (b) in Definition \ref{definition2},  the defining graph $W$ satisfies  the  following condition:
   \begin{enumerate}
\item[(c)] $\mathcal{O}_{W}$ is an arithmetic orbifold.
\end{enumerate}
    \end{defi}

\begin{theo} \label{coro}
A family of quasidisks $(\mathscr{B}, \pi)$ is the universal cover of an arithmetic family of Riemann surfaces if and only if it is isomorphic  to a Bers-Griffiths family of arithmetic type.
\end{theo}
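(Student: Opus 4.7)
The plan is to build on Theorem \ref{theo0}, which already handles the algebraic-family side, and to establish that arithmeticity corresponds precisely to the additional condition (c) via Shabat's Theorem---which, as mentioned above, equates arithmeticity of a hyperbolic Riemann orbifold of finite type with arithmeticity of its uniformizing Fuchsian group.

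For the direct implication, assume $f: V \to C$ is an arithmetic algebraic family. By Theorem \ref{theo0} its universal cover is a Bers-Griffiths family, so properties (a) and (b) are automatic; it remains to verify (c). The orbifold $\mathcal{O}_W = \Delta/\Gamma_W$ arises naturally as the quotient of $C$ by the finite group $G := \Gamma_W/\pi_1(C)$, which acts on $C$ by biholomorphisms induced by fiber-preserving biholomorphisms of the total space $V$. Since $V$, $C$ and $f$ are defined over $\hola$ and $\mbox{Aut}(C)$ is a finite group of $\hola$-automorphisms (as $C$ is of finite hyperbolic type), the action of $G$ on $C$ is automatically defined over $\hola$. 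Hence $C/G$, endowed with its natural orbifold structure coming from ramification, is arithmetic, i.e. $\mathcal{O}_W$ is an arithmetic orbifold.

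For the converse, assume $W$ satisfies (a), (b) and (c). Theorem \ref{theo0} yields an algebraic family $f: V \to C$ whose universal cover is $(\mathscr{B}_W, \pi)$; we must show this family is arithmetic. Shabat's Theorem applied to the arithmetic orbifold $\mathcal{O}_W$ realizes $\Gamma_W$, and hence its finite-index subgroup $\pi_1(C)$, as arithmetic Fuchsian groups; applying Shabat's Theorem once more (now to $\Delta/\pi_1(C) = C$) gives the arithmeticity of $C$. To obtain arithmeticity of $V$ and $f$, one uses the classifying moduli map $\mu: C \to \mathcal{M}_{g,n}$ associated to $f$: a point $c \in C$ corresponds to the marked Fuchsian uniformization $K_W^{t(c)} \subset \mbox{Aut}(\Delta)$ of the fiber $V_c$, and by hypothesis this assignment is $K$-equivariant for some finite index subgroup $K$ of $K_W$. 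The $K$-equivariance turns the transcendental holomorphic motion $W$ into algebraic descent data over $\hola$, forcing $\mu$ to be defined over a number field; hence $V$ (as a pullback of the universal family over $\mathcal{M}_{g,n}$) and the morphism $f$ are also arithmetic.

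The principal obstacle lies in the converse: rigorously converting the arithmeticity of the Fuchsian uniformization of the base orbifold $\mathcal{O}_W$ into the arithmeticity of the classifying moduli map $\mu$. This hinges on carefully exploiting the $K$-equivariance of $W$---the mechanism which links the holomorphic motion to an algebraic structure on the parameter space---in combination with Shabat's Theorem and the standard descent properties of the universal curve over $\mathcal{M}_{g,n}$. Handling the orbifold subtleties (ramification points of $\mathcal{O}_W$ arising from fibers with extra automorphisms, together with the marked punctures if $n>0$) correctly in the $K$-equivariant setting is the delicate point.
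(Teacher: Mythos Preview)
Your proposal contains a fundamental misidentification and a genuine gap.

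First, you have misread Shabat's Theorem. In this paper, Shabat's Theorem is the statement that for the universal cover $\mathscr{B}$ of a non-isotrivial algebraic family, $\mbox{Aut}(\mathscr{B})$ acts properly discontinuously and contains the covering group with finite index. It says nothing about ``arithmeticity of a Fuchsian group'' (a phrase which in any case has an established and quite different meaning). So the sentences in your converse that invoke Shabat's Theorem to transfer arithmeticity from $\mathcal{O}_W$ to $\Gamma_W$ and then to $C$ are not backed by anything in the paper.

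Second, in the direct implication you write $G:=\Gamma_W/\pi_1(C)$ and let it act on $C$. But $\Gamma=\pi_1(C)$ need not be normal in $\Gamma_W$, so this quotient is not a group and there is no action. The paper handles this by passing to the normal core $\Gamma_0=\bigcap_i \gamma_i\Gamma\gamma_i^{-1}$ and working with the intermediate curve $C_0=\Delta/\Gamma_0$, together with the branched covers $\overline{C}_0\to\overline{C}$ and $\overline{C}_0\to\overline{R}$, using the elementary fact that arithmeticity propagates along finite branched covers with arithmetic branch locus. Your idea is morally in the right direction but needs this correction.

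The serious gap is in the converse. Your sentence ``the $K$-equivariance turns the transcendental holomorphic motion $W$ into algebraic descent data over $\hola$, forcing $\mu$ to be defined over a number field'' is not an argument; the $K$-equivariance is a condition on the fibers and carries no Galois-descent information about the base. The paper's proof works by an entirely different mechanism that you do not mention: one passes to the level-three family $f_3:V_3\to C_3$, shows $C_3$ is arithmetic (using the same covering tricks as above), and then invokes \emph{Arakelov's Finiteness Theorem} to conclude that the set $\{f_3^{\sigma}:V_3^{\sigma}\to C_3^{\sigma}:\sigma\in\mbox{Gal}(\comp)\}$ contains only finitely many isomorphism classes. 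Since the level-three moduli spaces and universal curves are defined over $\mathbb{Q}$, this finiteness transfers to $\{(\overline{V}_3)^{\sigma}\}$, and the standard criterion (finitely many Galois conjugates up to isomorphism implies definability over $\hola$) then yields the result after some care with normalization, resolution, and the sections $\overline{s}_i$. Arakelov's theorem is the essential input; without it there is no way to bound the Galois orbit of the family, and your outline provides no substitute.
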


The typical case in which contition (c) is satisfied occurs when $\Gamma_{W}$ is commensurable to a hyperbolic triangle group (Corollary \ref{coro3}).

\subsection{Arithmetic complex surfaces} Let $S$ be a non-singular minimal projective surface of general type. It is easy to prove (see for example \cite{Gabino}) that if $S$ is arithmetic then there is a rational map $f: S \dashrightarrow \mathbb{P}^1$ with base locus $B \subset S$ and critical values $\mbox{crit}(f) \subset \mathbb{P}^1$ such that the induced map $$f: U:=S \setminus B \setminus f^{-1}(\mbox{crit}(f)) \to \mathbb{P}^1 \setminus \mbox{crit}(f)$$ is an
arithmetic
family of Riemann surfaces. Hence the universal cover of the Zariski open subset $U \subset S$ is a Bers-Griffiths domain of arithmetic type. The following theorem asserts that the converse also holds.

\begin{theo} \label{aritmetica}
Let $S$ be a non-singular minimal projective surface of general type. Then, the following statements are equivalent:
\begin{enumerate}
\item[(a)] $S$ is an arithmetic complex surface.
\item[(b)] Among all  Zariski open subsets of $S$ there is one whose universal cover is isomorphic to
 a Bers-Griffiths domain of arithmetic type.
\end{enumerate}
\end{theo}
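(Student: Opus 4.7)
The plan is to handle the two directions separately. The forward implication $(a) \Rightarrow (b)$ is essentially indicated in the paragraph preceding the statement: if $S$ is arithmetic, then by the results of \cite{Gabino} one can find a rational map $f \colon S \dashrightarrow \mathbb{P}^1$, defined over a number field, whose restriction to the complement $U$ of the base locus and of the fibres over $\mbox{crit}(f)$ is an arithmetic family of Riemann surfaces. Theorem \ref{coro} then identifies the universal cover of $U$ with a Bers-Griffiths domain of arithmetic type, as required.

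For the converse $(b) \Rightarrow (a)$, suppose $U \subset S$ is Zariski open and its universal cover $\widetilde{U}$ is biholomorphic to a Bers-Griffiths domain $\mathscr{B}_W$ of arithmetic type. The strategy is to upgrade this analytic datum to an arithmetic algebraic family structure $f \colon U \to C$. Once such a family is in place, Theorem \ref{coro} certifies that $U$ is an arithmetic quasiprojective variety, and the very definition of arithmeticity for $U \subset S$ then provides an isomorphism $\Phi \colon S \to Y$ onto a projective variety $Y$ defined over a number field, whence $S$ itself is arithmetic.

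To construct $f$, I would first invoke Proposition \ref{uniquely} to argue that the first-coordinate projection $\pi \colon \mathscr{B}_W \to \Delta$ is intrinsic to the complex manifold $\mathscr{B}_W$: since the defining motion $W$ is uniquely determined by $\mathscr{B}_W$, every biholomorphism of $\mathscr{B}_W$ must belong to $\mbox{Aut}_\pi(\mathscr{B}_W)$. Consequently the deck transformation group $\pi_1(U)$, which acts on $\mathscr{B}_W$ via the identification $\widetilde{U} \cong \mathscr{B}_W$, lies in $\mbox{Aut}_\pi(\mathscr{B}_W)$, and hence $\Theta$ sends it to a discrete subgroup of the Fuchsian group $\Gamma_W$. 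Because $U$ is quasiprojective, $\Theta(\pi_1(U))$ is finitely generated and sits inside $\Gamma_W$, so the quotient $C := \Delta / \Theta(\pi_1(U))$ is a Riemann orbifold of finite hyperbolic type (after passing, if necessary, to a finite \'etale cover of $U$ to eliminate orbifold torsion, a step that does not affect arithmeticity). The projection $\pi$ then descends to a holomorphic map $f \colon U \to C$ whose universal cover coincides with $(\mathscr{B}_W, \pi)$, making $f$ an algebraic family in the sense of Subsection \ref{DefFamily}. Theorem \ref{coro} now yields that $f$ is arithmetic, so in particular $U$ is arithmetic, and the observation of the preceding paragraph completes the proof.

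The main obstacle is the rigidity statement that every self-biholomorphism of $\mathscr{B}_W$ preserves the distinguished projection $\pi$, for this is what lets $\pi_1(U)$ be realised inside $\mbox{Aut}_\pi(\mathscr{B}_W)$. It is exactly here that Proposition \ref{uniquely} enters the argument, but some care is needed to turn uniqueness of the defining holomorphic motion into fibre-preservation for an arbitrary biholomorphism $\widetilde{U} \cong \mathscr{B}_W$. A secondary, more routine point is the verification that $C$ has finite hyperbolic type, which ultimately follows from the fact that $\pi_1(U)$ is finitely generated and embeds discretely into the finitely generated Fuchsian group $\Gamma_W$.
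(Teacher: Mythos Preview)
Your plan for $(b)\Rightarrow(a)$ has two genuine gaps.

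First, Proposition~\ref{uniquely} does not give what you need. It says that if two holomorphic motions $W_1,W_2$ of $\mathbb{S}^1$ have the \emph{same} graph in $\mathbb{C}^2$ (hence the same first-coordinate projection), then $W_1=W_2$. It says nothing about an abstract biholomorphism $\varphi:\mathscr{B}_W\to\mathscr{B}_W$: the composition $\pi\circ\varphi^{-1}$ need not even be a first-coordinate projection, so you cannot feed it into Proposition~\ref{uniquely}. The paper never claims $\mbox{Aut}(\mathscr{B}_W)=\mbox{Aut}_\pi(\mathscr{B}_W)$, and in its proof of Theorem~\ref{aritmetica} it explicitly works with the deck group $\mathbb{G}_1$ of $U$ as a subgroup of $\mbox{Aut}(\mathscr{B})$ \emph{without} assuming it lies in $\mbox{Aut}_\pi(\mathscr{B})$. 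So your route to equipping $U$ itself with a family structure $f:U\to C$ is not justified.

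Second, and more seriously, even granting such an $f$, your appeal to Theorem~\ref{coro} is a misreading. Theorem~\ref{coro} asserts that $(\mathscr{B},\pi)$ is of arithmetic type if and only if it is the universal cover of \emph{some} arithmetic family; it does \emph{not} say that every algebraic family with that universal cover is arithmetic. Thus ``Theorem~\ref{coro} now yields that $f$ is arithmetic'' does not follow, and the deduction that $U$ (hence $S$) is arithmetic collapses.

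The paper proceeds quite differently. It takes the arithmetic family $f:V\to C$ produced by Theorem~\ref{coro}, with covering group $\mathbb{G}_2\le\mbox{Aut}_\pi(\mathscr{B})$, and intersects it with the deck group $\mathbb{G}_1$ of $U$. Shabat's theorem makes $\mathbb{G}_{12}=\mathbb{G}_1\cap\mathbb{G}_2$ of finite index in both, so $V'=\mathscr{B}/\mathbb{G}_{12}$ maps finitely to both $U$ and $V$; moreover $\mathbb{G}_{12}\le\mathbb{G}_2\le\mbox{Aut}_\pi(\mathscr{B})$, so $V'$ inherits a family structure for free. One then argues (via the level-three construction, Arakelov finiteness, and the fact that $C$ is arithmetic) that $\{(\overline{V'_3})^\sigma\}_\sigma$ has finitely many isomorphism classes, and finally invokes Kobayashi's extension theorem together with Maehara's finiteness result to conclude that $\{S^\sigma\}_\sigma$ is finite, whence $S$ is arithmetic. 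The passage from ``some arithmetic family shares the universal cover'' to ``$S$ is arithmetic'' is precisely the substance of the proof, and it requires this commensurability-plus-finiteness argument rather than a direct application of Theorem~\ref{coro}.
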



We recall that any non-singular projective surface $S$ admits a minimal model $S_{min}$ from which $S$ is obtained by a finite sequence of blow-ups and that
$S$ is arithmetic if and only if $S_{min}$ and the
  finite set of points of $S_{min}$ at which these blow-ups are centered are arithmetic (see \cite{Gabino}).

\

A Kodaira fibration $S \to C$ is a non-isotrivial holomorphic family of compact Riemann surfaces over a compact Riemann surface $C.$  Such complex surfaces were studied by  Kodaira \cite{Kodaira},  Atiyah \cite{Atiyah},  Hirzebruch \cite{Hirzebruch}
  and Kas \cite{Kas} as examples of  differentiable fiber bundles whose signatures  are not multiplicative. It is known that the base   and   the fibers of such  families must be of genus   at least two and three respectively, and that  $S$
 must be a minimal projective surface of general type.

  We will observe that for  Kodaira fibrations, the Zariski open subset in the previous theorem can be taken to be $S$ itself.
This way we obtain the following strengthening of the main theorem
  obtained by the authors in \cite{Koda1}.

\begin{theo} \label{aritmeticaK}
Let $S \to C$ be a Kodaira fibration. Then, the following statements are equivalent:
\begin{enumerate}
\item[(a)] $S$ is an arithmetic complex surface.
\item[(b)] The  universal cover of $S$ is isomorphic to a
Bers-Griffiths domain of arithmetic type.
\end{enumerate}
\end{theo}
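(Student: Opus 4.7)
The plan is to derive the theorem from Theorem \ref{aritmetica} by arguing that, for a Kodaira fibration, one may choose $U=S$ in the statement of that theorem. Since a Kodaira fibration $f:S\to C$ is everywhere submersive with compact fibers of genus $\geq 3$ over a compact base $C$ of genus $\geq 2$, it is an algebraic family of Riemann surfaces in the sense of Subsection \ref{DefFamily} with empty base locus and no critical values. Thus $V=U=S$ is the natural source of the family, and Theorems \ref{theo0}--\ref{coro} apply directly to the Kodaira fibration itself.

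The implication (b)$\Rightarrow$(a) is essentially immediate: $S$ is trivially a Zariski open subset of itself, so hypothesis (b) is exactly condition (b) of Theorem \ref{aritmetica}, yielding the arithmeticity of $S$.

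For the converse (a)$\Rightarrow$(b), the strategy is to promote the hypothesis \emph{$S$ is arithmetic} to the sharper assertion that the Kodaira fibration $f:S\to C$ is an arithmetic family of Riemann surfaces, i.e.\ that $S$, $C$ and $f$ are simultaneously definable over a number field. Once this is established, Theorem \ref{coro} applied to $f:S\to C$ identifies the universal cover of $S$ with a Bers-Griffiths domain of arithmetic type, which is the desired conclusion. To carry out the promotion, I would use a Galois-descent argument combined with a rigidity input. Suppose $S$ is defined over a number field $k\subset\hola$. For each $\sigma\in\mathrm{Gal}(\comp/k)$, any $k$-isomorphism $S\cong S^{\sigma}$ transports the conjugate fibration $f^{\sigma}:S^{\sigma}\to C^{\sigma}$ to a Kodaira fibration on $S$. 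The classical de Franchis--Severi theorem, in the form that a smooth projective surface admits only finitely many surjective morphisms onto smooth curves of genus $\geq 2$ (up to isomorphism of the target), ensures that the Galois orbit of the pair $(C,f)$ is finite. Consequently the stabilizer of this orbit is a finite-index subgroup of $\mathrm{Gal}(\comp/k)$, so a suitable finite extension $k'/k$ is a field of definition of the whole triple $(S,C,f)$, proving that $f:S\to C$ is arithmetic.

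The main obstacle is precisely the de Franchis--Severi type finiteness statement for Kodaira fibrations on a fixed surface of general type; everything else reduces, as sketched, to an application of Theorem \ref{coro} together with the observation that for Kodaira fibrations the exceptional set $B\cup f^{-1}(\mathrm{crit}(f))$ appearing in the discussion preceding Theorem \ref{aritmetica} is empty.
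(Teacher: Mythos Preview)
Your proposal is correct and follows essentially the same route as the paper. For (b)$\Rightarrow$(a) both you and the paper invoke Theorem~\ref{aritmetica} with $U=S$; for (a)$\Rightarrow$(b) the paper uses the Howard--Sommese finiteness theorem (their extension of de Franchis to surjections from a fixed variety onto curves of genus $\geq 2$), which is exactly the rigidity input you call the de Franchis--Severi theorem, and then the Galois-orbit-finiteness criterion for arithmeticity together with Theorem~\ref{coro}, just as you outline.
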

In particular, the arithmeticity, and in fact the  (algebraic closure of  the)   field of definition (see the remark below)
of a   Kodaira fibration depends only on the biholomorphic class of its universal cover.

\begin{rema} It is worth observing that, as  in  \cite{Koda1},
Theorems \ref{coro}, \ref{aritmetica} and \ref{aritmeticaK}
can be stated in a
  slightly more general form. As a matter of fact, we can replace the word ``arithmetic'' by ``can be defined over $k$'' where $k$ is any algebraically closed subfield of $\comp.$
\end{rema}

The paper is organized as follows. In Section \ref{s12} we will briefly review the facts of  Teichm\"{u}ller  and Moduli theory that will be needed throughout the paper. In Section \ref{MH} we include results concerning the relationship between holomorphic motions, their graphs and the universal covers of families of Riemann surfaces.
  Sections \ref{s4}, \ref{sec4} and \ref{s7} will be   devoted to prove Theorems  \ref{theo0}, \ref{coro} and \ref{aritmetica} and \ref{aritmeticaK} respectively.

\

This article is based on results contained in the second author's Ph. D. thesis at the Universidad Aut\'onoma de Madrid.

\section{Teichm\"{u}ller and Moduli Theory} \label{s12}

Let $X$ be a Riemann surface  of finite type. A {\it marking} on $X$ is a pair $(h,Y)$ where $Y$ is a Riemann surface and $h: X \to Y$ is a quasiconformal homeomorphism. Two markings $(h_1,Y_1)$ and $(h_2,Y_2)$ are equivalent if there is an isomorphism $\varphi : Y_1 \to Y_2$ of Riemann surfaces such that $h_2^{-1}\varphi h_1$ is homotopically trivial. The {\it Teichm\"{u}ller space} $T(X)$ of $X$ is the set of classes of markings $[(h,Y)]$ on $X$. The point $[(id, X)]$ is usually referred to as the origin of $T(X).$ The {\it mapping class group} $\mbox{Mod}(X)$ is the group of homotopy classes of quasiconformal homeomorphisms of $X.$

Let $G$ be a Fuchsian group. Let us denote by $L_1^{\infty}(\Delta, G)$ the space of measurable functions $\mu : \Delta \to \comp$ with $L^{\infty}$ norm less than one that are $G-$invariant in the sense that for all $\gamma \in G$ the equality $(\mu \circ \gamma) \cdot \overline{\gamma'}/\gamma'=\mu$ holds almost everywhere; its elements are called {\it Beltrami coefficients} of $G.$ By Ahlfors-Bers' Existence Theorem (see for example \cite[p. 34]{Nag}), for each $\mu \in L_1^{\infty}(\Delta, G)$ there exists a unique quasiconformal homeomorphism $w^{\mu}$ of the Riemann sphere fixing $-1, 1, i$ whose complex dilatation $\bar{\partial} w^{\mu}/\partial w^{\mu}$ agrees with $\mu$ in $\Delta,$ vanishes in $|z| > 1$ and is $G-$compatible in the sense that $G^{\mu}:=w^{\mu}G(w^{\mu})^{-1}$ is a group of M\"{o}bius transformations. Two Beltrami coefficients $\mu$ and $\nu$ are  equivalent if the corresponding maps $w^{\mu}$ and $w^{\nu}$ agree on $\partial \Delta=\mathbb{S}^1.$ The {\it Teichm\"{u}ller space} $T(G)$ of $G$ is the set of classes $[\mu]$ of Beltrami coefficients of $G.$ The {\it extended mapping class group} $\mbox{mod}(G)$ of $G$ is the group of quasiconformal homeomorphism of $\Delta$ which normalize $G$ modulo the subgroup consisting of those which extend to the boundary as the identity map. The {\it mapping class group} $\mbox{Mod}(G)$ of $G$ is the quotient group $\mbox{mod}(G)/G.$

\s

Let us now assume that $G$ is a torsion free Fuchsian group such that the quotient $X:=\Delta/G$ is a Riemann surface of finite hyperbolic type $(g,n).$ Let $\mu \in L_1^{\infty}(\Delta, G).$ We shall denote by $X^{\mu}$ the Riemann surface $w^{\mu}(\Delta)/G^{\mu}$ and by $f^{\mu}: X \to X^{\mu}$ the quasiconformal homeomorphism induced by $w^{\mu}.$ It is a classical result that the rule
\begin{equation} \label{bijection}
\mu \mapsto (f^{\mu}, X^{\mu})
\end{equation}
induces a bijection between $T(G)$ and $T(X)$ and also a group isomorphism  between $\mbox{Mod}(G)$ and $\mbox{Mod}(X).$ We notice that the class of the Beltrami coefficient $\mu=0$ in $T(G)$ corresponds to the origin $[(id, X)]$ of $T(X).$ From now on, we shall identify both spaces and both groups and we shall employ the notation $T_{g,n}$ and $\mbox{Mod}_{g,n}$ to refer to the Teichm\"{u}ller space and the mapping class group respectively. Accordingly, we shall use the notation $\mbox{mod}_{g,n}$ to refer to the group $\mbox{mod}(G).$

We recall (see, for example, the third chapter in \cite{Nag}) that,
by Bers' Embedding Theorem, the Teichm\"{u}ller space acquires a structure of finite dimensional complex analytic manifold on which the mapping class group acts properly discontinuously as a group of biholomorphic automorphisms by the formula
$$
f\circ [(h,Y)]= [(h \circ f^{-1},Y)].
$$
 Therefore,  the corresponding quotient space $\mathscr{M}_{g,n}:=T_{g,n}/\mbox{Mod}_{g,n}$ is naturally endowed with a  structure of normal complex analytic space. This space is known as the {\it moduli space} of Riemann surfaces of  finite  type $(g,n)$ and its points correspond bijectively to the set of isomorphism classes of Riemann surfaces (or equivalently, algebraic curves) of finite   type $(g,n).$


Bers also constructed a fiber space  $F_{g,n} \to T_{g,n}$ such that the fiber over a point $[\mu]$ is the quasidisk $w^{\mu}(\Delta).$ More precisely,
\begin{equation} \label{BersFiberspace}
F(G)=F_{g,n}= \{  ([\mu], z)\in T_{g,n}\times \comp :\mu \in L_1^{\infty}(\Delta,G) \text{ and }  z\in w^{\mu}(\Delta) \}.
\end{equation}

 Moreover, the
  action of  $\mbox{Mod}_{g,n}$ on $T_{g,n}$ can be lifted to an action of $\mbox{mod}_{g,n}$ (hence of $G$)  on $ F_{g,n}$ giving
  rise to two important fiber spaces.

  The first one is
  the so-called {\it universal family} (or {\it Teichm\"{u}ller curve})  of finite type $(g,n)$
  $$p_{g,n}: V(G)=V_{g,n}:=F_{g,n}/G \to T_{g,n}.$$
This is a holomorphic fiber space with the property that the fiber over $[\mu]$ is the Riemann surface of finite type $(g,n)$ given by the quotient $w^{\mu}(\Delta)/G^{\mu}.$ We remark that if $[\mu]$ corresponds to $[(h,Y)]$ (via the bijection (\ref{bijection})) then $w^{\mu}(\Delta)/G^{\mu}$ is isomorphic to $Y.$

The second one is a quotient of the first one, namely
$$\pi_{g,n} : \mathscr{C}_{g,n}:=F_{g,n}/\mbox{mod}_{g,n} =V_{g,n}/\mbox{Mod}_{g,n}\to \mathscr{M}_{g,n},$$
 and is usually called
the {\it universal curve} of  type $(g,n)$. This is a
  normal analytic   space with the property that the fiber over a point   representing a Riemann surface $Y$ is a Riemann surface isomorphic to the quotient $Y/\mbox{Aut}(Y).$

\s

Loosely speaking, one can``fill in the $n$ punctures" of the fibers of $p_{g,n}$ and $\pi_{g,n}$ so as to
to obtain  fiber spaces
$$p_{g,n}^+:V_{g,n}^+ \to T_{g,n}$$
and
$$\pi_{g,n}^+:\mathscr{C}_{g,n}^+  \to \mathscr{M}_{g,n}$$
whose fibers are the closures of the corresponding
fibers of
$p_{g,n}$ and
$\pi_{g,n}$. The projections
$p_{g,n}^+$ and $\pi_{g,n}^+$ come equipped with $n$ disjoint holomorphic   global sections given by the position of the $n$ punctures. We shall denote
 those of $\pi_{g,n}^+ $   by
  $$ s_i : {\mathscr{M}}_{g,n} \to {\mathscr{C}}_{g,n}$$
   These fiber spaces can be obtained  
by using a Fuchsian group $G$ with   $n$ conjugacy classes of torsion elements in the construction of the Teichm\"{u}ller space $T(G)$ (see e.g.   \cite[p. 323]{Nag}).

\s

Let now $f: V \to C$ be an algebraic family of Riemann surfaces of finite type $(g,n)$; let us denote by  $p : \Delta \to C$ be the universal covering map of $C.$ Let  $h: p^*V \to \Delta$ be the pull-back family  and   $X$ be the central fiber $h^{-1}(0) \cong f^{-1}(h(0))$, so that we have a commutative diagram as follows.
$$
\begin{tikzpicture}[node distance=3.1 cm, auto]
  \node (P) {$p^*V$};
  \node (Q) [right of=P] {$V$};
  \node (A) [below of=P, node distance=1.2 cm] {$\Delta$};
  \node (C) [below of=Q, node distance=1.2 cm] {$C.$};
  \draw[->] (P) to node {} (Q);
  \draw[->] (A) to node {$p$} (C);
  \draw[->] (P) to node [swap] {$h$} (A);
    \draw[->] (Q) to node {${f}$} (C);
\end{tikzpicture}
$$

It is easy to check that  $h: p^*V \to \Delta$ is again a family of Riemann surfaces  (see e.g. \cite[Section 2.2]{EA}). Since $\Delta$ is contractible, this new family $h$ admites
  a topological trivialization
$H: \Delta \times X \to  p^*V$; we can further assume that
  the restriction of $H$ to $\{0\} \times X$ is the identiy map. If we denote by $H_t$ the restriction of $H$ to $\{t\} \times X$ we can consider the
{\it classifying map}   of $h$
\begin{equation} \label{classifyingmap}
 \tilde{\Phi} : \Delta \to T(X)\equiv T_{g,n}
\end{equation}
given by $t \mapsto [(H_t, h^{-1}(t))]$; note that  $\tilde{\Phi}(0)=[(id, X)].$ By the Universal Property of the Teichm\"{u}ller curve
(see for example \cite[p. 349]{Nag} and \cite[p. 250]{ERF})
$\tilde{\Phi}$ is a holomorphic map such that

\begin{equation} \label{pullback}
  p^*V \cong (\tilde{\Phi})^*V_{g,n}.
\end{equation}

Clearly, the classifying map above induces
a well-defined (also called) classifying map $\Phi = \Phi_f: C \to \mathscr{M}_{g,n}$ of $f$ defined by sending a point $x \in C$ to the point $[ f^{-1}(x) ] \in \mathscr{M}_{g,n}$ representing the isomorphism class of the fiber of $f$ over $x.$

\s

We shall  denote by $\mbox{Mod}_{g,n}^{[d]}$ the {\it level $d$ mapping class group}. This is   the finite index normal subgroup of $\mbox{Mod}_{g,n}$ consisting of those homotopy classes of homeomorphisms of $X$ which induce the identity map on the homology group $H_1(X, \mathbb{Z}/d\mathbb{Z})$. We notice that, by a theorem of Serre (see for example \cite[p. 275]{Farkas}), if $d\ge 3$ then $\mbox{Mod}_{g,n}^{[d]}$ does not contain non-trivial elements of finite order. Thereby, quotienting by this group yields a holomorphic fibration of non-singular complex curves$$\pi_{g,n}^{[d]} : \mathscr{C}_{g,n}^{[d]}:=V_{g,n}/\mbox{Mod}_{g,n}^{[d]} \to \mathscr{M}_{g,n}^{[d]}:=T_{g,n}/\mbox{Mod}_{g,n}^{[d]}$$called the {\it level $d$ universal curve} of finite type $(g,n).$
The advantage of this fibration over  the standard universal curve
$\pi_{g,n} : \mathscr{C}_{g,n}   \to \mathscr{M}_{g,n}$ is that now
  the fiber over a point representing a Riemann surface $Y$ is a Riemann surface isomorphic to $Y$ (instead of a quotient of it).
	In the same way as above, one can construct a fibration
$${\pi}_{g,n}^{+[d]}:\mathscr{C}_{g,n}^{+[d]}
\to \mathscr{M}_{g,n}^{[d]}$$
whose fibers are the closures   of the fibers of ${\pi}_{g,n}^{[d]}:\mathscr{C}_{g,n}^{[d]}
\to \mathscr{M}_{g,n}^{[d]}.$
We shall refer to this fibration as the {\it level $d$ $n-$pointed universal curve} of genus $g.$
As before, this fibation
  possesses $n$ holomorphic  (and, in fact, algebraic; see further down in this section) disjoint global sections
	$$ s_i^{[d]} : {\mathscr{M}}_{g,n}^{[d]} \to {\mathscr{C}}_{g,n}^{+[d]}$$
	one for each of the punctures.
For the sake of explicitness, from now on,  we will let $d$ be equal to three.

\s

Let $\Gamma$ be the  covering group of the universal cover $p: \Delta \to C.$ By the {\it monodromy} of the family $f: V \to C$ we will understand the group  homomorphism $\mathbf{M} : \Gamma \to \mbox{Mod}_{g,n}$  defined by
 $$\tilde{\Phi} \circ \gamma = \mathbf{M}(\gamma) \circ \tilde{\Phi}$$
for $\gamma \in \Gamma.$  Let  $\Gamma_3$ be the preimage of $\mbox{Mod}_{g,n}^{[3]}$ under $\mathbf{M}$ and $C_3$ be the respective quotient Riemann surface $\Delta/\Gamma_3.$ We will denote by $p_3 : C_3 \to C$ the finite degree covering map induced by the inclusion $\Gamma_3 \le \Gamma$ and by $f_3 : V_3 \to C_3$ the pull-back of $f:V\to C$ by $p_3.$ Then, the classifying map $\tilde{\Phi}$ induces a level three classifying map
$$
 \Phi_{f_3} : C_3 \to \mathscr{M}_{g,n}^{[3]}
$$
which permits to recover the family $f_3$ as the pull-back of the level three universal curve of type $(g,n)$ by $\Phi_{f_3}.$ More precisely, there is a commutative diagram, as follows. $$
\begin{tikzpicture}[node distance=3.1 cm, auto]
  \node (P) {$V_3 \cong \Phi_{f_3}^*\mathscr{C}_{g,n}^{[3]}$};
  \node (Q) [right of=P] {${\mathscr{C}}_{g,n}^{[3]}$};
  \node (A) [below of=P, node distance=1.3 cm] {$C_3$};
  \node (C) [below of=Q, node distance=1.3 cm] {${\mathscr{M}}_{g,n}^{[3]}$};
  \draw[->] (P) to node {} (Q);
  \draw[->] (A) to node {$\Phi_{f_3}$} (C);
  \draw[->] (P) to node [swap] {$f_3$} (A);
    \draw[->] (Q) to node {${\pi}_{g,n}^{[3]}$} (C);
\end{tikzpicture}
$$

If we replace
 $\mathscr{C}_{g,n}^{[3]}$ with
$\mathscr{C}_{g,n}^{+[3]}$ this diagram becomes

 $$
\begin{tikzpicture}[node distance=3.1 cm, auto]
  \node (P) {$V_3^+ := \Phi_{f_3}^*\mathscr{C}_{g,n}^{+[3]}$};
  \node (Q) [right of=P] {${\mathscr{C}}_{g,n}^{+[3]}$};
  \node (A) [below of=P, node distance=1.3 cm] {$C_3$};
  \node (C) [below of=Q, node distance=1.3 cm] {${\mathscr{M}}_{g,n}^{[3]}.$};
  \draw[->] (P) to node {} (Q);
  \draw[->] (A) to node {$\Phi_{f_3}$} (C);
  \draw[->] (P) to node [swap] {$f_3^+$} (A);
    \draw[->] (Q) to node {${\pi}_{g,n}^{+[3]}$} (C);
\end{tikzpicture}
$$

Note that each of the  $n$ sections $s_i^{[3]} : {\mathscr{M}}_{g,n}^{[3]} \to {\mathscr{C}}_{g,n}^{+[3]}$
 induces  a   section  \begin{equation} \label{sections}
s_i^+:= \Phi_{f_3}^*s_i^{[3]}: C_3 \to V_3^+
\end{equation}
of the family $f_3^+:V_3^+ \to C_3$ just defined, and that  $V_3$ equals $V_3^+$ minus the images of these  sections (c.f. \cite[Example 6.2.10]{Hubbard}).

 This new family $V_3^+$ can be seen as the family obtained from
$f_3:V_3 \to C_3$ by compactifying the fibers. In order to also compactify the base, we need to consider
the Deligne-Mumford compactification
$\bar{\mathscr{M}}_{g,n}$  of
 $\mathscr{M}_{g,n}$ introduced by these authors in their foundational article \cite{DM}.
  We recall that $\bar{\mathscr{M}}_{g,n}$ is an irreducible projective variety whose points correspond bijectively to the set of isomorphism classes of stable   curves (or, equivalently, Riemann surfaces with nodes) of type $(g,n)$.
As in the case of the standard   universal curve
there  is a  fibration
$$\bar{\pi}_{g,n}: \bar{\mathscr{C}}_{g,n} \to \bar{\mathscr{M}}_{g,n}$$
	characterized by the property
that the fiber over a point   representing a stable curve   $Y$  of finite type $(g,n)$
is a  stable curve isomorphic to the quotient
 $Y^+/ \mbox{Aut}(Y^+)$, where
  $Y^+$ stands for the compactification of $Y$.
 We shall refer to this map as the
{\it stable $n-$pointed universal curve} of genus $g$.

%

\s

Similarly, the level three universal curve $\pi_{g,n}^{[3]}: {\mathscr{C}}_{g,n}^{[3]} \to {\mathscr{M}}_{g,n}^{[3]}$ gives rise to the
 {\it stable level three $n-$pointed universal curve} of genus $g$
$$\bar{\pi}_{g,n}^{[3]} : \bar{\mathscr{C}}_{g,n}^{[3]} \to \bar{\mathscr{M}}_{g,n}^{[3]}$$
 which is  an extension of the fibration ${\pi}_{g,n}^{+[3]}:\mathscr{C}_{g,n}^{+[3]}
\to \mathscr{M}_{g,n}^{[3]}.$

  We remark that
although
   a  Teichm\"{u}ller theoretic approach to   the
	construction of the compactified moduli spaces  is  also possible
 (see e.g. \cite{Leon} and \cite{Hubbard}), moduli theory of algebraic curves was first built by Mumford and others within the framework of Algebraic Geometry. In particular these moduli spaces, as well as the sections
 $ s_i : {\mathscr{M}}_{g,n} \to {\mathscr{C}}_{g,n}$
 and
 $ s_i ^{+[3]}: {\mathscr{M}}_{g,n}^{[3]} \to {\mathscr{C}}_{g,n}^{+[3]}$,
  are algebraic objects.
Of crucial importance for us will be  that  all these moduli spaces and universal curves, and in particular
the algebraic varieties $\bar{\mathscr{C}}_{g,n}^{[3]},  {\mathscr{C}}_{g,n}^{+[3]}$ and $\bar{\mathscr{M}}_{g,n}^{[3]}$, are even defined over $\mathbb{Q}$ (in fact, over $\mathbb{Z}[\tfrac{1}{3}]$). See \cite{DM} and \cite{Mumford}.

\

We can now construct a suitable compactification of our level three family $f_3:V_3 \to C_3$.
Using a result of Kobayashi on extensions of holomorphic maps between  complex analytic spaces, Imayoshi \cite[p. 289]{ima1} proved that classifying maps of algebraic families of Riemann surfaces can be holomorphically extended to the Deligne-Mumford compactification. In our case, the same arguments employed by Imayoshi ensure that $\Phi_{f_3}$ admits an   extension
\begin{equation} \label{classifcompact}
 \bar{\Phi}_{f_3} : \overline{C}_3 \to \bar{\mathscr{M}}_{g,n}^{[3]}
 \end{equation}
which is is holomorphic and, by Chow's theorem  (see e.g. \cite[Chapter 4]{Dmu}), even algebraic.
Therefore, we can     pull-back   the
  stable level three $n-$pointed universal curve
    to obtain a two-dimensional compact complex analytic space (and, in fact a projective surface)
 \begin{equation} \label{V3compact}
 \overline{V}_3:=\bar{\Phi}_{f_3}^* \bar{\mathscr{C}}_{g,n}^{[3]}
 \end{equation}
 which comes equipped  with a surjection
\begin{equation} \label{familycompactified}
 \overline{V}_3
\xrightarrow{\  \overline{f}_3 \ } \overline{C}_3
\end{equation}
that extends the surjection $f_3:V_3 \to C_3$;  and $n$ sections
 \begin{equation} \label{sectionscompact}
 \overline{s}_i: \overline{C}_3 \to  \overline{V}_3;   \   \    i=1, \ldots, n
\end{equation}
that extend the sections $s_i^+=  C_3 \to V_3^+ \subset \overline{V}_3 $ introduced above   (see e.g. \cite[Chapter I, Proposition 6.8]{Hartshorne}).  We note that $V_3$ is a Zariski open set of $\overline{V}_3$ whose  complement is
   the union of the      stable curves arising as
  fibers of $\overline{f}_3$ over the finite set $\overline{C}_3 \setminus C_3$ together with the images of the sections $\overline{s}_i$.

\section{Graphs of holomorphic motions and
 Uniformization of Families of Riemann Surfaces} \label{s2}
  \label{MH}

In this section we investigate the relationship between holomorphic motions of the unit circle and universal covers of  families of Riemann surfaces. The connection is established via  graphs of holomorphic motions.

\begin{prop} \label{uniquely}
A holomorphic motion of the unit circle $W$ is uniquely determined by its graph.
	\end{prop}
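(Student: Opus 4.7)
The plan is to canonically extend each motion holomorphically to the exterior of the unit disk, and then to invoke the rigidity of $\mbox{Aut}(\Delta)$ as a totally real subgroup of $PSL(2,\comp)$. Suppose $W^{(1)}$ and $W^{(2)}$ are two holomorphic motions of $\mathbb{S}^1$ sharing the same graph $\mathscr{B}$. Then for each $t\in\Delta$ the quasidisks $D_t = W^{(i)}_t(\Delta)$ coincide, so both motions parametrize the common boundary $\partial D_t$, and the task is to show $W^{(1)}(t,s) = W^{(2)}(t,s)$ for every $(t,s)\in\Delta\times\mathbb{S}^1$.

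First, I would replace each $W^{(i)}$ by its Bers-Ahlfors extension $\hat{W}^{(i)}:\Delta\times\overline{\comp}\to\overline{\comp}$, namely the unique normalized family of quasiconformal homeomorphisms $\hat{W}^{(i)}_t$ of $\overline{\comp}$ that agrees with $W^{(i)}_t$ on $\mathbb{S}^1$ and is \emph{conformal} on the exterior $\overline{\comp}\setminus\overline{\Delta}$. The key ingredient which I would borrow from the work of Earle and Fowler in \cite{ERF} is that $\hat{W}^{(i)}(t,z)$ is jointly holomorphic in $(t,z)$ on $\Delta\times(\overline{\comp}\setminus\overline{\Delta})$; this is the parametric version of the standard fact that a holomorphic family of Beltrami coefficients yields a holomorphic family of Ahlfors-Bers solutions.

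With this holomorphicity in hand, I would form the composition $\Phi_t := (\hat{W}^{(2)}_t)^{-1}\circ\hat{W}^{(1)}_t$. On $\overline{\comp}\setminus\overline{\Delta}$ both factors are conformal bijections onto the common $\overline{\comp}\setminus\overline{D_t}$, hence $\Phi_t$ is a conformal self-bijection of $\overline{\comp}\setminus\overline{\Delta}$, and therefore the restriction of an element of $\mbox{Aut}(\Delta)\cong PSU(1,1)$. The joint holomorphicity makes $t\mapsto\Phi_t$ a holomorphic map $\Delta\to PSL(2,\comp)$, while the initial condition $W^{(i)}(0,\cdot)=\mbox{id}$ gives $\Phi_0=\mbox{id}$. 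Since $PSU(1,1)$ is a totally real submanifold of $PSL(2,\comp)$ (its Lie algebra $\mathfrak{psu}(1,1)$ satisfies $\mathfrak{psu}(1,1)\cap i\,\mathfrak{psu}(1,1)=0$), such a holomorphic map is necessarily constant, and so $\Phi_t\equiv\mbox{id}$; hence $\hat{W}^{(1)}_t = \hat{W}^{(2)}_t$ on the exterior, and passing to boundary values yields $W^{(1)}=W^{(2)}$ on $\Delta\times\mathbb{S}^1$.

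The main obstacle I expect is the first step, namely establishing the joint holomorphic dependence of the conformal exterior extension on the motion parameter $t$. Once this is granted, the rest of the argument is a one-line application of the totally real embedding $PSU(1,1)\hookrightarrow PSL(2,\comp)$; this is presumably the reason the text announces the proof as an easy consequence of the results of Earle and Fowler.
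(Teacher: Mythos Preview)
Your argument has a genuine gap at the very first step: the extension $\hat W^{(i)}_t$ you posit --- a quasiconformal self-map of $\overline{\comp}$ that is conformal on $\Delta^*:=\overline{\comp}\setminus\overline\Delta$ \emph{and} agrees with $W^{(i)}_t$ on $\mathbb{S}^1$ --- need not exist. If it did, its restriction to $\Delta^*$ would be a conformal bijection $\Delta^*\to D_t^*$ whose boundary values coincide with $W^{(i)}_t$; but the boundary values of conformal maps $\Delta^*\to D_t^*$ form only a three--real--parameter family (the orbit of one such map under $\mbox{Aut}(\Delta^*)$), whereas $W^{(i)}_t$ is an arbitrary quasisymmetric homeomorphism $\mathbb{S}^1\to\partial D_t$. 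A concrete counterexample is the holomorphic motion $W(t,s)=s+ts^{2}$ of $\mathbb{S}^1$ (over $|t|<\tfrac12$): the Fourier coefficient of $W_t$ at frequency $+2$ equals $t\neq0$, while the boundary values of any conformal map $\Delta^*\to D_t^*$ with a simple pole at $\infty$ have vanishing Fourier coefficients at all frequencies $\ge 2$. (By contrast $W(t,s)=s+t\bar s$ \emph{does} extend conformally to $\Delta^*$ but not to $\Delta$, so neither side is canonical.) The Ahlfors--Bers dependence result you invoke produces a holomorphic family $w^{\mu_t}$ once a holomorphic family of Beltrami data on $\Delta$ is chosen, but it does not force $w^{\mu_t}|_{\mathbb{S}^1}=W^{(i)}_t$, nor, under the standard normalization, even $w^{\mu_t}(\Delta)=D_t$; so the composition $\Phi_t$ you form is neither a self-map of $\Delta^*$ nor related to the original $W^{(i)}$ in the way you need.

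The paper's proof avoids this obstruction entirely. It observes that the maps $(t,z)\mapsto(t,W^{(i)}_t(z))$ are \emph{strong global trivializations} of the family $\mathscr{B}\to\Delta$ in the sense of Earle--Fowler, and then quotes their Lemma~7 to conclude that the homotopy class of $\omega_t:=(W^{(2)}_t)^{-1}\circ W^{(1)}_t$ \emph{relative to $\mathbb{S}^1$} is independent of $t$. Since $\omega_0=\mbox{id}$, this forces $\omega_t|_{\mathbb{S}^1}=\mbox{id}$ for every $t$, which is exactly the desired conclusion. The Earle--Fowler input being used is thus a homotopy rigidity statement for trivializations, not a conformal-extension or totally-real-submanifold argument.
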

\begin{proof}
Let $\mathscr{B}_1$ and $\mathscr{B}_2$ be
 graphs of holomorphic motions of the circle
  $W^1$ and $W^2$ respectively.
  By the $\lambda-$lemma, the maps $$\theta_i : \Delta \times \Delta \to \mathscr{B}_i \hspace{1 cm} (t,z) \mapsto (t, W_t^i(z))$$
	 are  strong global trivializations of the  associated  families  of quasidisks
	 $ \mathscr{B}_i \to \Delta$ in the sense of Earle and Fowler (see \cite[p. 252]{ERF}).
	
	 Suppose now that the two graphs, hence the two associated families, agree. Then,
	using Lemma 7 in the same paper \cite[p. 263]{ERF}, we deduce that the homotopy class of $$\omega_t:=(W_t^2)^{-1} \circ W_t^1$$
relative to $\mathbb{S}^1$ is independent of $t$. Thus, $\omega_t$ and $\omega_0=(W_0^2)^{-1}\circ W_0^1=id$
 agree on $\mathbb{S}^1;$ and therefore $W_t^1=W_t^2$ on $\mathbb{S}^1$, for all $t \in \Delta$. The proof is done.
\end{proof}

\s

 As the following example shows, Proposition \ref{uniquely} allows us to give plenty of examples of non-trivial holomorphic motions whose graphs are not Bers-Griffiths domains.

\begin{example} \label{counterexample}
The graph
of the holomorphic motion
 $$W(t,z)=z+t^2 \bar{z}$$
  is not a Bers-Griffiths domain.

By Proposition  \ref{uniquely} to check our claim it is enough to show that $W$ is $K$-equivariant for no finite index subgroup $K$ of $K_W$. But, in fact, it is easy to check that it is $K$-equivariant for no  non-trivial group  $K$  of M\"{o}bius transformations.

Indeed, let $\gamma(z)=\lambda \tfrac{z-a}{1-\bar{a}z}$, with $|a| < 1$ and $|\lambda| = 1$. Then,
 $$W_t\circ \gamma \circ W_t^{-1}(z)= \lambda \left[ \frac{(z-t^2\bar{z})-a(1-|t|^4)}{(1-|t|^4)-\bar{a}(z-t^2\bar{z})}+t^2 \frac{(\bar{z}-\bar{t}^2z)-\bar{a}(1-|t|^4)}{(1-|t|^4)-a(\bar{z}-\bar{t}^2z)}  \right]$$
Thus, $\overline{\partial}(W_t\circ \gamma \circ W_t^{-1}) \equiv 0$ if and only if
 $$[(1-|t|^4)-a(\bar{z}-\bar{t}^2z)]^2-\bar{t}^2[(1-|t|^4)-\bar{a}(z-t^2\bar{z})]^2=0$$
for all $t,z \in \Delta.$ Now the claim follows from the observation
 that for $z=0$ this expression equals $(1-|t|^4)^2(1-\bar{t}^2)$ which is always non-zero.

We note that,  according to Wang \cite[p. 6]{Wang}, the graph $ \mathscr{B}_W$ of a holomorphic motion of the form
$W(t,z)=z+a(t) \bar{z}$
	is   biholomorphic to $\Delta^2$
	only when $a\equiv 0$.
	\end{example}

With the same notations as in Subsection \ref{BBA} we have the following:

\begin{prop} \label{p8}
 Let  $W: \Delta \times \Delta \to {\comp}$
be a holomorphic motion
 with associated group sequence
  \[ \xymatrix {
  1 \ar[r] &
  K_{W} \ar[r] &
  \mbox{Aut}_{\pi}(\mathscr{B}) \ar[r] &
  \Gamma_{W} \ar[r] &
  1.
} \]
Suppose that  $W$ is $K_W-$equivariant, then $X_t(K_W)$ agrees with $K_W^t$ for all $t \in \Delta.$ In particular, the groups $K_{W}^t$ are all quasiconformally conjugate to $K_W$ by means of $W_t$.
\end{prop}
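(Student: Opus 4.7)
The plan is to lift each $\kappa\in K_W$ to a fibre-preserving automorphism $\Psi_\kappa$ of $\mathscr{B}_W$ whose restriction to each fibre $D_t$ is precisely the Möbius transformation $X_t(\kappa)$; this will force $X_t(\kappa)\in K_W^t$, and the Earle--Marden injectivity of $\Phi_0$ will give the reverse inclusion for free.

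First I would invoke the Earle--Kra--Krushkal equivariant extension theorem recalled in Subsection \ref{BBA} to replace $W$ by a $K_W$-equivariant extension $\Delta\times\overline{\comp}\to\overline{\comp}$. This is legitimate because, as observed earlier in the paper, the graph $\mathscr{B}_W$ depends only on $W|_{\Delta\times\Delta}$. With this extension in hand, the equivariance identity $W(t,\kappa(z))=X_t(\kappa)(W(t,z))$ now holds for \emph{every} $z\in\Delta$, not merely for $z\in\mathbb{S}^1$. A short preliminary check is that $t\mapsto X_t(\kappa)\in\mathbb{P}\mbox{SL}(2,\comp)$ is holomorphic: fixing three distinct points $s_1,s_2,s_3\in\mathbb{S}^1$, the transformation $X_t(\kappa)$ is determined as the unique Möbius map sending $W(t,s_i)$ to $W(t,\kappa(s_i))$, and both triples depend holomorphically on $t$ by property $(b)$ of holomorphic motions.

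Now set
$$
\Psi_\kappa\colon\mathscr{B}_W\to\mathscr{B}_W,\qquad (t,w)\mapsto (t,X_t(\kappa)(w)).
$$
The interior equivariance guarantees that $\Psi_\kappa$ really lands in $\mathscr{B}_W$: if $w=W_t(z)$ with $z\in\Delta$, then $X_t(\kappa)(w)=W_t(\kappa(z))\in D_t$ because $\kappa\in\mbox{Aut}(\Delta)$. Since $\Psi_\kappa$ is holomorphic, fibre-preserving and admits $\Psi_{\kappa^{-1}}$ as a two-sided inverse, it belongs to $\mathbb{K}_\pi$. To identify it I would specialise to $t=0$: because $W_0=\mathrm{id}$, the equivariance gives $\kappa=X_0(\kappa)$ on $\mathbb{S}^1$, hence $X_0(\kappa)=\kappa$ as Möbius transformations of $\overline{\comp}$. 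Consequently $\Phi_0(\Psi_\kappa)=\kappa$, and the Earle--Marden injectivity of $\Phi_0$ pins down $\Psi_\kappa$ as the unique lift of $\kappa\in K_W=K_W^0$. Applying $\Phi_t$ to this lift yields $X_t(\kappa)=\Phi_t(\Psi_\kappa)\in K_W^t$, proving $X_t(K_W)\subseteq K_W^t$. The reverse inclusion is then immediate: given $\phi\in K_W^t$, write $\phi=\Phi_t(\tilde\phi)$ with $\tilde\phi\in\mathbb{K}_\pi$ and set $\kappa=\Phi_0(\tilde\phi)\in K_W$; injectivity of $\Phi_0$ forces $\tilde\phi=\Psi_\kappa$, so $\phi=X_t(\kappa)\in X_t(K_W)$.

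For the final assertion, rereading the equivariance on $\Delta$ as $X_t(\kappa)=W_t\circ\kappa\circ W_t^{-1}$ on $D_t$ exhibits $K_W^t=W_tK_WW_t^{-1}$ as the conjugate of $K_W$ by the quasiconformal homeomorphism $W_t$ provided by the $\lambda$-lemma. The only non-routine step I anticipate is the preliminary promotion of the equivariance from $\mathbb{S}^1$ to $\Delta$: without it the map $\Psi_\kappa$ is not a priori well-defined on $\mathscr{B}_W$, and the whole construction collapses. Once the Earle--Kra--Krushkal extension is in place, every remaining step reduces to transparent bookkeeping.
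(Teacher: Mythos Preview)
Your proof is correct and follows essentially the same approach as the paper's: both construct the fibre-preserving automorphism $(t,w)\mapsto(t,X_t(\kappa)(w))$, verify holomorphicity in $t$ via a three-point argument, and identify it with $\Phi_0^{-1}(\kappa)$ using the Earle--Marden isomorphism. The only difference is cosmetic: you explicitly invoke the Earle--Kra--Krushkal extension to promote equivariance from $\mathbb{S}^1$ to $\Delta$, whereas the paper simply opens by asserting the equivariance identity on $\Delta\times\Delta$.
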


\begin{proof}
As $W$ is $K_{W}-$equivariant, for all pairs $(t,z) \in \Delta \times \Delta$ and all M\"{o}bius transformations $\kappa \in K_{W}$, one has $W(t,\kappa(z))=X_t(\kappa)(W(t,z))$ where $X_t(\kappa)$ is a M\"{o}bius transformation; in particular, for each $t \in \Delta$ the map $z \mapsto W_t\kappa W_t^{-1}(z)=X_t(\kappa)(z)$ is holomorphic. We claim that so does the map $t \mapsto W_t\kappa W_t^{-1}(z)$ for each $z \in \Delta.$ Indeed, following \cite[p. 929]{Earle}, let us consider three distinct points $z_1, z_2, z_3$ in $\Delta$ and let\begin{displaymath}
h_t =
\left( \begin{array}{cc}
a_t & b_t\\
c_t & d_t
\end{array} \right), \hspace{1 cm} a_td_t-b_tc_t=1
\end{displaymath}be the unique M\"{o}bius transformation satisfying the property $h_t(z_i)=W(t, z_i)$, for all $i=1,2,3$, so that   $W(t,\kappa(z_i))=X_t(\kappa)(h_t(z_i)).$ Then, as the maps $t \mapsto W(t, z_i)$ and $t \mapsto W(t,\kappa(z_i))$ are holomorphic, we see that $t \mapsto h_t$ and $t \mapsto X_t(\kappa) \circ h_t$ are holomorphic as well. Thereby, the rule $t \mapsto X_t(\kappa(z))=W_t\kappa W_t^{-1}(z)$ defines a holomorphic map, as claimed.

We conclude that for all $\kappa \in K_{W}$ the rule $(t,z) \mapsto (t, W_t\kappa W_t^{-1}(z))$ defines a biholomorphism of $\mathscr{B}_W$ lying in the kernel group $\mathbb{K}_{W}.$ Now, as $\Phi_0: \mathbb{K}_{W} \to K_{W}$ is an isomorphism, this biholomorphism must agree with $\Phi_0^{-1}(\kappa).$ It follows that each element of $\mathbb{K}_{W}$ is of the form $(t,z) \mapsto (t,W_t\kappa W_t^{-1}(z) )$ for some $\kappa \in K_{W}$ and therefore $X_t(K_W)=K_{W}^t$ for all $t \in \Delta.$ The last statement follows from the $\lambda-$lemma.
\end{proof}

It is well known that there are only three simply connected Riemann surfaces. In contrast, understanding universal covers of higher dimensional complex analytic manifolds  seems to be far more complicated. However, thanks to the work of Bers \cite{Bers} and Griffiths \cite{Griffiths} on uniformization of algebraic varieties, it is possible to describe the universal cover of an algebraic family of Riemann surfaces in the following very explicit form.

\s

\begin{prop} \label{p9}
Let $f: V \to C$ be a non-isotrivial algebraic family of Riemann surfaces.
Let $p : \Delta \to C$ be the universal covering map of $C$ with covering group $\Gamma$ and let $h: p^*V \to \Delta$ be the corresponding pull-back family.
  Let us denote  by $X\equiv \Delta/K $ the central fiber $h^{-1}(0) \cong f^{-1}(p(0))$ and by $\tilde{\Phi}:\Delta \to T(X)\equiv T(K)$ the classifying map (\ref{classifyingmap}). Then, the
	 universal cover of
 $f: V \to C$
	is isomorphic to the
 Bers-Griffiths family  $\mathscr{B}_W \to \Delta$ defined by the
   $K$-equivariant holomorphic motion $W$ given by
$$
 W: \Delta \times \Delta \to  \comp \hspace{1 cm} (t,z) \mapsto W(t,z):=w^{\mu_t}(z)
$$
 where the map $t\in \Delta \to \mu_t \in L^{\infty}_1(\Delta,K)$ results from the composition of  $\tilde{\Phi}$ followed by a fixed continuous section (e.g. the Douady-Earle section) of
$F(K)\to T(K)$.
 \end{prop}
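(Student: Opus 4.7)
The strategy is to realize the universal cover of $V$ explicitly via the pullback description (\ref{pullback}) of the classifying map, after factoring the Teichm\"{u}ller curve as a quotient of Bers' fiber space.

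First I would combine (\ref{pullback}) with the identity $V_{g,n} = F(K)/K$ to obtain $p^*V \cong \tilde{\Phi}^* V_{g,n} \cong \bigl(\tilde{\Phi}^* F(K)\bigr)/K$, so that $\tilde{\Phi}^* F(K)$ is a $K$-Galois covering of $p^*V$. Using the description (\ref{BersFiberspace}) of $F(K)$ together with the section $t \mapsto \mu_t$, I would identify $\tilde{\Phi}^* F(K)$ with $\{(t,z) \in \Delta \times \comp : z \in w^{\mu_t}(\Delta)\}$, which is precisely the graph $\mathscr{B}_W$ of the map $W(t,z) = w^{\mu_t}(z)$.

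Next I would verify that $W$ is a $K$-equivariant holomorphic motion. The normalization $\tilde{\Phi}(0) = [(\mathrm{id}, X)]$ together with $\mu_0 = 0$ gives $W(0,z) = z$; holomorphy in $t$ follows from the holomorphic dependence of $w^\mu$ on $\mu$ (Ahlfors-Bers) combined with the holomorphicity of $\tilde{\Phi}$ and of the Douady-Earle section; injectivity in $z$ is clear since each $w^{\mu_t}$ is a homeomorphism. For $K$-equivariance, since $\mu_t \in L_1^\infty(\Delta, K)$, the rule $X_t(\kappa) := w^{\mu_t} \kappa (w^{\mu_t})^{-1}$ produces a M\"{o}bius transformation for every $\kappa \in K$, and the relation $W(t, \kappa(z)) = X_t(\kappa)(W(t,z))$ is immediate from the definition.

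Finally, I would conclude that $\mathscr{B}_W$ is the universal cover of $V$ and a Bers-Griffiths family. The $\lambda$-lemma implies that $(t,z) \mapsto (t, W_t(z))$ is a homeomorphism $\Delta \times \Delta \to \mathscr{B}_W$, making $\mathscr{B}_W$ contractible; composing the $K$-cover $\mathscr{B}_W \to p^*V$ with the $\Gamma$-cover $p^*V \to V$ therefore presents $\mathscr{B}_W$ as the universal cover of $V$. Non-triviality of $W$ follows from the non-isotriviality assumption on $f$: a product decomposition $\mathscr{B}_W \cong \Delta^2$ would, via Proposition \ref{uniquely} and the construction, force all fibers of $f$ to be mutually isomorphic. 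The Fuchsian-of-finite-hyperbolic-type conditions on $K_W$ and $\Gamma_W$ are obtained from the inclusions $K \hookrightarrow K_W$ and $\Gamma \hookrightarrow \Gamma_W$ coming from the deck transformations of $\mathscr{B}_W \to V$, both of which turn out to have finite index. The main obstacle, and the technical heart of the argument, is making the biholomorphism $\tilde{\Phi}^* F(K) \cong \mathscr{B}_W$ rigorous at the level of complex structures, and carefully verifying the finite-index assertions that promote the resulting graph to a genuine Bers-Griffiths family.
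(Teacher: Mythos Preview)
Your approach is essentially the paper's: both realize the universal cover of $V$ as the pullback $\tilde{\Phi}^*F(K)$ of the Bers fiber space via the classifying map and identify it with the domain $\{(t,z): z\in w^{\mu_t}(\Delta)\}=\mathscr{B}_W$. The paper's proof of Proposition~\ref{p9} is in fact very brief and does not verify the full Bers--Griffiths conditions there; the finite-index inclusions $K\le K_W$ and $\Gamma\le\Gamma_W$ that you correctly flag as the technical heart are established separately, in the proof of Theorem~\ref{theo0}, by invoking Shabat's Theorem (stated at the end of Section~\ref{MH}), so that is the ingredient you are missing rather than something to be proved by hand. Two small corrections: the Douady--Earle section is only continuous, not holomorphic, but this does no harm because for $z\in\mathbb{S}^1$ the value $w^{\mu}(z)$ depends only on the class $[\mu]\in T(K)$ and does so holomorphically, which is all that is needed for the motion of $\mathbb{S}^1$ and hence for $\mathscr{B}_W$; and non-triviality of $W$ is obtained in the paper not through Proposition~\ref{uniquely} but from the result of Imayoshi--Nishimura \cite{IN} that the bidisk uniformizes only isotrivial families.
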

 \begin{proof}
This proposition is  a consequence of Bers'  results  on universal families and classifying maps. It is stated in this same form in Theorem 3.1 of \cite{Chirka}; but it can be easily derived from the properties of the classifying map described in Section \ref{s12}.
  Clearly, the universal cover of $V$ agrees with the universal cover of
$p^*V \cong (\tilde{\Phi})^*V_{g,n}$  (see (\ref{pullback})), and the latter
 can be realized as the pull-back of the Bers fiber space, namely (see (\ref{BersFiberspace})) $$(\tilde{\Phi})^*F_{g,n}=\{(t, z) : t \in \Delta, z \in w^{\mu_t}(\Delta)\}$$
with    $[\mu_t] =\tilde{\Phi}(t).$
   We note   that $w^{\mu_0}(z)=z$ for all $z \in \Delta.$	 	
\end{proof}	

 \

We end this section by turning our attention to the automorphism group of Bers-Griffiths domains.
Let  $\mathbb{G} < \mbox{Aut}_{\pi}(\mathscr{B}_W)$  be the universal covering group of our algebraic family $f:V\to C.$
 If we restrict to $\mathbb{G}$
the sequence associated to $W$  we obtain a new exact sequence   \[ \xymatrix {
  1 \ar[r] &
  \mathbb{K}:=\mathbb{K}_W \cap \mathbb{G} \ar[r] &
  \mathbb{G} \ar[r]^{\Theta \,\,\,\,\,\,\,\,\,\,\,\,\,} &
  \Gamma:=\Theta(\mathbb{G}) \ar[r] &
  1
} \]
which shall be referred to as {\it the group sequence
associated to the the family} $f:V\to C.$
We observe that the family   can be completely recovered from its associated sequence as $\mathscr{B}/\mathbb{G} \to \Delta /\Gamma \equiv C $; the fiber over a point $ [t]_{\Gamma}$ being the Riemann surface  $D_t/K^t$ , where
$K^t:=\Phi_t( \mathbb{K})$
is the restriction of the subgroup $\mathbb{K}<\mathbb{K}_W$ to the quasidisk $D_t$, which by Proposition \ref{p8}
agrees with
$W_tK^0W_t^{-1}$. We note that the groups $K^0$  and $\Gamma$ are both torsion free Fuchsian groups.

 \

In \cite{Sha} and \cite{Shabat}
Shabat studied the automorphism group of the universal cover $\mathscr{B}$ of an algebraic family of Riemann surfaces. He showed that except for the case in which $\mathscr{B}$ is a bounded homogeneous domain in $\comp^2$, the group
$\mbox{Aut}(\mathscr{B})$ is  discrete.
By a well-known result of Cartan there are only
two such exceptional domains, namely the unit ball and the  bidisk.
  However, the first one  never occurs as the universal cover of an algebraic family of Riemann surfaces (see \cite[Theorem 1]{IN}) and the latter arises only  when the family is isotrivial
 (see \cite[Theorem 2]{IN}). This being observed,
Shabat's results can be formulated as follows (see also \cite{BB} and \cite{AA} for more details).

\s

{\bf Theorem (Shabat).}
Let $f: V \to C$ be a non-isotrivial algebraic family of Riemann surfaces and $\mathscr{B}$ the universal cover of $V.$ Then:
\begin{enumerate}
\item[(a)] $\mbox{Aut}(\mathscr{B})$ acts properly discontinuously on $\mathscr{B}.$ \item[(b)] The universal covering group of $V$ has finite index in $\mbox{Aut}(\mathscr{B}).$
\end{enumerate}


\section{Proof of Theorem \ref{theo0}}
\label{s4}

(1) Let $(\mathscr{B},\pi)$ be the universal cover of an algebraic family. Then, with the notation of Proposition \ref{p9}, Shabat's Theorem implies that  the base and
the fiber groups $K_W$ and $\Gamma_W$ of the holomorphic motion $W$ contain the Fuchsian groups
$K$ and $\Gamma$ as finite index subgroups; therefore $K_W$ and $\Gamma_W$ must be Fuchsian as well. The $K-$equivariancy was already established in the same proposition. This proves that $(\mathscr{B}, \pi)$ is isomorphic to a Bers-Griffiths family.

\s

(2) Conversely, let us suppose that $\mathscr{B}_W \to \Delta$ is a Bers-Griffiths family. This means that   $\Gamma_{W}$ and $K_{W}$ are Fuchsian groups of finite hyperbolic type and that   $W$   is $K-$invariant for some finite index subgroup  $K$ of $K_{W}$. In fact, by    Lemma \ref{andrei}  below it can be assumed that   $K=\mathbb{G} \cap K_{W}$, for some finite index subgroup $\mathbb{G}< \mbox{Aut}_{\pi}(\mathscr{B}_W).$
 What we have to do is to construct an algebraic family of Riemann surfaces $V\to C$ of which $\mathscr{B}_W \to \Delta$ is the universal cover.

By the work of Earle, Kra and Krushkal (see \cite[Theorem 1]{Earle})
we can extend $W$ to a $K-$equivariant holomorphic motion of $\Delta$ which we still denote by $W$.

Let
 \[ \xymatrix {
  1 \ar[r] &
  \mathbb{K}:=\mathbb{K}_{W} \cap \mathbb{G} \ar[r] &
  \mathbb{G} \ar[r] &
  \Theta(\mathbb{G}):=\Gamma  \ar[r] &
  1
} \]
be the exact sequence obtained by restriction of $\Theta: \mbox{Aut}_{\pi}(\mathscr{B}_W) \to \Gamma_W$ to $\mathbb{G}.$ By Selberg's lemma \cite{Selberg} there is a finite index normal subgroup $\Gamma' $ of $\Gamma $ which is torsion free. By further  restriction of $\Theta$ to $\mathbb{G}':=\Theta^{-1}(\Gamma')$ one obtains a sequence \[ \xymatrix {
  1 \ar[r] &
  \mathbb{K}'=\mathbb{K} \ar[r] &
  \mathbb{G}' \ar[r] &
  \Gamma' \ar[r] &
  1
} \] with the same kernel. We shall denote by $K^t$ the image of $\mathbb{K}$ under  the monomorphism $\Phi_t: \mathbb{K}_{W} \to \mbox{Aut}(D_t)$ defined in (\ref{restriction}). We remark that $K^t$ is a finite index subgroup of $K_{W}^t$ for all $t \in \Delta.$ (Note that
$K^0=K.$)

\s

{\bf Claim.} $\mathbb{G}'$ acts properly discontinuously on $\mathscr{B}=\mathscr{B}_W.$

\s

Let $(t', z')$ be an arbitrary point of $\mathscr{B}.$ As $\Gamma' $ is a torsion free Fuchsian group, there exists a neighborhood $U \subset \Delta$ of $t'$ such that $\gamma(U) \cap U = \emptyset$ for each non-trivial element $\gamma \in \Gamma'.$ Let $z_0$ be the point in $\Delta$ such that $W_{t'}(z_0)=z'.$ As we are assuming that $K_{W}$ is a Fuchsian group, the group  $K$ is Fuchsian as well. So, there exists a neighborhood $V_0 \subset \Delta$ of $(0,z_0)$ such that $\kappa(V_0) \cap V_0 \neq \emptyset$ for only finitely many elements $\kappa \in K$. Let us now consider the map
 $$\phi_W : \Delta \times \Delta \to \Delta \times \comp \hspace{1 cm} (t,z) \mapsto (t, W_t(z))$$
whose image is $\mathscr{B}.$
By the  $\lambda$-lemma   holomorphic motions are continuous, thus the map  $\phi_W$ is a continuous injection.
 Hence, the Invariance of  Domain Theorem allows us to assert that $$\mathscr{U}:=\phi_W(U \times V_0) =\{(t,z): t \in U, z \in W_t(V_0)\} \subset \mathscr{B}$$is a neighborhood of $(t',z')$ homeomorphic to $U \times V_0.$

Now, let $g(t,z)=(\hat{g}(t), g_t(z))$ be an automorphism of $\mathscr{B}$ in $\mathbb{G}'$ such that $g(\mathscr{U}) \cap \mathscr{U} \neq \emptyset.$ Then, there are points $(t_1, z_1)$ and $(t_2, z_2)$ in $\mathscr{U}$ in such a way that $$g(t_1,z_1)=(\hat{g}(t_1), g_{t_1}(z_1))=(t_2,z_2).$$
As $\Theta(g)=\hat{g} \in \Gamma'$ the choice of $\mathscr{U}$ implies that $\hat{g}=id$. We conclude that
 $g\in \mathbb{K}$,
$t_1=t_2,$  and $g_{t_1}\in K^{t_1}.$
Let $z_0^i$ be the unique point in $V_0$ such that $W_{t_1}(z_0^i)=z_i, (i=1,2).$
By invoking the same arguments used in the proof of Proposition \ref{p8},
we see that
 $K^{t_1}=W_{t_1}KW_{t_1}^{-1}.$ Thus, there is some $\kappa \in K$ such that $g_{t_1}\in W_{t_1}\kappa W_{t_1}^{-1}.$ We observe that $\kappa(z_0^1)=z_0^2$ which means that $\kappa(V_0) \cap V_0 \neq \emptyset$ and then, by the choice of $V_0,$ there are only finitely many possibilities for $\kappa.$ Consequently, there are also only finitely many possibilities for $g_{t_1},$ hence for $g$ (see Subsection \ref{BBA}). This proves the claim.

\s

Our claim implies that the sequence\[ \xymatrix {
  1 \ar[r] &
  \mathbb{K}'=\mathbb{K}  \ar[r] &
  \mathbb{G}' \ar[r] &
  \Gamma' \ar[r] &
  1
} \]
yields a holomorphic map $$V:= \mathscr{B}/\mathbb{G}' \to C:=\Delta/\Gamma' $$
between a two-dimensional complex analytic space $V$ and a Riemann surface $C$ of finite hyperbolic type,
with fibers homeomorphic  $\Delta/K.$
Now, if
  $\mathbb{G}'$ acted freely on $\mathscr{B}$ then
we could conclude that $V \to C$ is a non-isotrivial   algebraic  family of Riemann surfaces
(with horizontally holomorphic trivializations provided by the holomorphic motion $W$).
In any case, as $\Gamma'$ is torsion free, the elements of $\mathbb{G}'$ that fix some point must belong to the kernel group $\mathbb{K}'$ and be in bijection with the torsion elements of $K$. Therefore there is only a finite number of conjugacy classes of them. Now, by a result of Johnson
  \cite[Theorem 3.6]{residual},
  the facts that $\Gamma'$ is a torsion free Fuchsian group and that $\mathbb{K}' $ is finitely generated and residually finite, imply that $\mathbb{G}'$ is a residually finite group; (alternatively, see \cite[ Corollary 3.7]{Andrei}).
  It follows that there exists a finite index normal subgroup $\mathbb{G}''$ of $\mathbb{G}'$ which does not contain any non-trivial torsion element. Now, to obtain the result, it  only remains to replace
$\mathbb{G}'$
by
$\mathbb{G}''.$ The proof of the theorem is now complete.

\begin{lemm} \label{andrei}
Let
\[ \xymatrix {
  1 \ar[r] &
  K \ar[r] &
  \mathbb{A} \ar[r] &
  \Gamma \ar[r] &
  1
} \]
be an exact sequence of abstract groups  in which  $K$ and $\Gamma$ are isomorphic to Fuchsian groups of finite type.  Then, for every finite index subgroup $N$ of $K$ there is a finite index subgroup $\mathbb{G}$ of $\mathbb{A}$ such that the group   $  K \cap \mathbb{G}$ is a finite index subgroup of $N$.
\end{lemm}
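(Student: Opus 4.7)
The plan is to reduce the statement to a residual finiteness problem in a suitable quotient of $\mathbb{A}$. More precisely, I will first locate a finite-index subgroup $N' \leq N$ of $K$ that is characteristic in $K$ (and hence, being a characteristic subgroup of the normal subgroup $K$, is itself normal in $\mathbb{A}$), and then seek a finite-index subgroup $\bar{\mathbb{G}}$ of $\bar{\mathbb{A}} := \mathbb{A}/N'$ whose intersection with the (necessarily finite) normal subgroup $\bar{K} := K/N'$ is trivial. Its preimage $\mathbb{G}$ in $\mathbb{A}$ will then have finite index and satisfy $\mathbb{G} \cap K = N' \leq N$, so $\mathbb{G} \cap K$ is automatically a finite-index subgroup of $N$ as required.

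Constructing $N'$ is straightforward. Since $K$ is a finitely generated Fuchsian group, it has only finitely many subgroups of each finite index; one may therefore take $N'$ to be the intersection of all subgroups of $K$ of index $[K:N]$. This subgroup is of finite index in $K$, contained in $N$, and characteristic in $K$, hence normal in $\mathbb{A}$. Assuming for the moment that $\bar{\mathbb{A}}$ is residually finite, the subgroup $\bar{\mathbb{G}}$ above is obtained by separating each of the finitely many non-identity elements of $\bar{K}$ from the identity by a finite-index subgroup of $\bar{\mathbb{A}}$ and intersecting these finitely many subgroups.

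The crux of the argument is therefore to establish residual finiteness of $\bar{\mathbb{A}}$, and this is the step where I expect the main obstacle. My approach is to apply Selberg's lemma to pick a torsion-free finite-index subgroup $\Gamma_1 \leq \Gamma$ and set $\mathbb{A}_1 := \pi^{-1}(\Gamma_1)$, where $\pi \colon \mathbb{A} \to \Gamma$ is the quotient projection. The group $\mathbb{A}_1/N'$ then fits into a short exact sequence $1 \to \bar{K} \to \mathbb{A}_1/N' \to \Gamma_1 \to 1$ with finite kernel and torsion-free Fuchsian quotient, so Johnson's theorem \cite[Theorem 3.6]{residual} (the very result invoked in the proof of Theorem \ref{theo0} above) applies and yields that $\mathbb{A}_1/N'$ is residually finite. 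Since $\mathbb{A}_1/N'$ has finite index in $\bar{\mathbb{A}}$, and since residual finiteness is inherited by any overgroup containing a residually finite subgroup of finite index (consider the action on cosets and then use residual finiteness inside), $\bar{\mathbb{A}}$ is residually finite as well, completing the plan. Everything else in the argument is formal bookkeeping about indices and preimages.
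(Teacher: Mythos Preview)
Your proof is correct and follows the same overall architecture as the paper's: pass to a characteristic finite-index subgroup $N'\le N$ so that $N'\lhd\mathbb{A}$, reduce the problem to finding a finite-index subgroup of $\mathbb{A}/N'$ missing the finitely many non-trivial elements of $K/N'$, and conclude by establishing residual finiteness of $\mathbb{A}/N'$. The one substantive difference lies in the tool used for this last step. The paper first observes (via Selberg) that one may take $\Gamma$ torsion free, hence a free or surface group, and then invokes the result of Grunewald, Jaikin-Zapirain and Zalesskii \cite[Proposition~3.6]{Andrei} that an extension of a residually finite \emph{good} group (in Serre's sense) by a finite kernel is residually finite. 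You instead pull back to a torsion-free finite-index $\Gamma_1\le\Gamma$ and apply Johnson's theorem \cite[Theorem~3.6]{residual} directly to the extension $1\to K/N'\to \mathbb{A}_1/N'\to\Gamma_1\to 1$, whose kernel is finite (hence finitely generated and residually finite), and then pass residual finiteness up to the finite-index overgroup $\mathbb{A}/N'$. Your route is marginally more self-contained in that it avoids the cohomological notion of goodness and reuses a reference already exploited in the proof of Theorem~\ref{theo0}; the paper's route, on the other hand, isolates a cleaner general principle (finite-kernel extensions of good residually finite groups are residually finite) that applies beyond the Fuchsian setting.
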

\begin{proof} The proof can be achieved through the following four steps:
 \begin{enumerate}
\item[(1)] Using Selberg's lemma, as above, we can assume that  $\Gamma$ is torsion free, hence either free or a surface group. In particular $\Gamma$ is good in the sense of Serre (see \cite{Serre}).
\item[(2)]   Replacing $N$ by the intersection of all subgroups of $K$ of same index as $N$ we can assume that $N$ is normal in $\mathbb{A}$.
 \item[(3)] Considering the exact sequence
 \[ \xymatrix {
  1 \ar[r] &
  K/N \ar[r] &
  \mathbb{A}/N \ar[r] &
  \Gamma \ar[r] &
  1
} \]
the problem is reduced to show that there is a finite index   subgroup $\mathbb{G}/N$ of $\mathbb{A}/N$ such that  $(K/N) \cap (\mathbb{G}/N) $ is the trivial group, i.e. such that   $\mathbb{G}/N$
 does not contain any of the non-trivial elements of the finite group  $K/N$. This will of course be the case if the group $\mathbb{A}/N$ were residually finite.
 \item[(4)]  The desired conclusion is reached by applying the following result due to Grunewald, Jaikin-Zapirain and Zaleski (\cite[Proposition 3.6]{Andrei}):

 Let $\Gamma$ be a residually finite good group and $\varphi: A \to \Gamma$ a surjective homomorphism with finite kernel. Then $A$ is residually finite.
 \end{enumerate}
\end{proof}

\begin{rema} \label{sulg} For later use, we observe that
 in proving the theorem, we have exhibited two ways to construct new families of Riemann surfaces of arbitrarily high base and  fiber genus out of a given one.
  Namely, if the given family has associated sequence
   \[ \xymatrix {
  1 \ar[r] &
  \mathbb{K} \ar[r] &
  \mathbb{G} \ar[r] &
  \Gamma \ar[r] & 1} \]
then, by considering its restriction to a finite index subgroup of $\Gamma$ or/and
 $\mathbb{G}$
  we were able to obtain a new family whose base, and by Lemma \ref{andrei} also its fibers, has higher genus. The importance of this simple remark for us lies on the fact  that all these families have the same
universal covers.
\end{rema}

\section{Proof of Theorem \ref{coro} } \label{sec4}

(1) ({\bf The only if part})

Suppose that $(\mathscr{B}, \pi)$ is the universal cover of an arithmetic family
 $f: V \to C$. By Theorem  \ref{theo0}, the family   $(\mathscr{B}, \pi)$ is isomorphic to
   a Bers-Griffiths family
 $\mathscr{B}_W \to \Delta.$  We must show that
  $\mathcal{O}_{W}$ is an arithmetic orbifold.

 Let     $\mathbb{G} < \mbox{Aut}_{\pi}(\mathscr{B})$ the universal cover of $V$ and
 \[ \xymatrix {
  1 \ar[r] &
  \mathbb{K} \ar[r] &
  \mathbb{G} \ar[r] &
  \Gamma \ar[r] &
  1
} \]
be the group sequence
associated to the  family $f: V \to C$.

 By Shabat's Theorem $\Gamma$ is a finite index subgroup of $\Gamma_{W}$ and therefore the inclusion $\Gamma \le \Gamma_{W}$ gives rise to a finite degree branched covering map between $C\cong \Delta/\Gamma$  and $R\cong \Delta/\Gamma_W.$
We shall denote by $\beta: \overline{C} \to \overline{R}$ the induced covering map between the corresponding compact Riemann surfaces. We note that, being the image of a curve defined over $\hola$ under  a covering map, the algebraic curve $\overline{R}$ is also defined over $\hola$. \cite[Theorem 4.4]{Gabino2}.

Now, let $\gamma_1, \ldots, \gamma_s$ be representatives of the cosets of $\Gamma_{W}/\Gamma.$ Consider the normal core subgroup $\Gamma_0:=\cap_{i=1}^s \gamma_i \Gamma \gamma_i^{-1}$  of $\Gamma.$ We notice that $\Gamma_0$ is a torsion free finite index normal subgroup of $\Gamma_{W}.$ We denote by $C_0$ the Riemann surface $\Delta/\Gamma_0$ and by $\pi_1 : C_0 \to C$ and $\pi_0: C_0 \to R$ the covering maps induced by the inclusions $\Gamma_0 \le \Gamma$ and $\Gamma_0 \le \Gamma_{W}.$  The maps $\pi_1$ and $\pi_0$ extend to covering maps $\bar{\pi}_1$ and $\bar{\pi}_0$ between the respective compactifications in such a way that the following diagram commutes$$
\begin{tikzpicture}[node distance=1.5 cm, auto]
  \node (P) {$\overline{C}_0$};
  \node (A) [below of=P, left of=P] {$\overline{C}$};
  \node (C) [below of=B, right of=P] {$\overline{R}.$};
   \draw[->] (P) to node [swap] {$\bar{\pi}_1$} (A);
  \draw[->] (P) to node {$\bar{\pi}_0$} (C);
  \draw[->] (A) to node [swap] {$\beta$} (C);
\end{tikzpicture}
$$
As $C$ is an arithmetic curve the branch \label{ver}values of $\bar{\pi}_1,$ which are contained in $\overline{C} \setminus C,$ are points defined over $\hola$. It follows that both the curve $\overline{C}_0$ and the map $\bar{\pi}_1$ are defined over $\hola$ \cite[Theorem 4.1]{Gabino2}. Moreover, since the arithmeticity of a hyperbolic algebraic curve implies that of its automorphisms  \cite[Corollary 3.4]{Gabino2}, it follows that the normal covering  $\bar{\pi}_0:\overline{C}_0 \to \overline{R}$
is arithmetic as well.
Now, since by construction  $\overline{R} \setminus R$ is the image under
 $\bar{\pi}_0$ of the preimage under $\bar{\pi}_1$ of the
   arithmetically defined set  $\overline{C} \setminus C$,
  the points in $\overline{R} \setminus R$ are also defined over $\hola$; hence $R$ is arithmetic. As, in addition,  each branch value of the universal covering map $\Delta \to R$ is a  branch value of $\bar{\pi}_0,$ we  conclude that each  conic point $q_i$ of the orbifold $\mathcal{O}_{W}$ is also defined over $\hola.$ This proves that $\mathcal{O}_{W}$ is an arithmetic orbifold, as wanted.

\s

(2) Before starting with the proof of the converse we need  to establish a couple of facts.
Let
$\Phi : C \to \mathscr{M}_{g,n}$  be the classifying map of an algebraic  family of Riemann surfaces $f:V\to C$.
In  Section \ref{s12},
we introduced a covering $C_3 \to C$, a
 new family
$f_3 : V_3 \to C_3$,  with same universal cover as $f:V\to C$, and a compactification
$\overline{f}_3 : \overline{V}_3 \to \overline{C}_3$ such that $\overline{V}_3$ is a projective surface tha
contains $ V_3$ as a Zariski open set. The
 complement
 $\overline{V}_3 \setminus V_3$ consists of
  a  finite number of   stable curves arising as
  fibers of $\overline{f}_3$ over the points in $\overline{C}_3 \setminus C_3$ together with the images of $n$ sections $\overline{s}_i:\overline{C}_3 \to \overline{V}_3 $  (see (\ref{sectionscompact})). Moreover, by Remark \ref{sulg},  we can assume that the base and the generic fiber of $\overline{f}_3 : \overline{V}_3 \to \overline{C}_3$
 are compact Riemann surfaces of genus greater or equal to two.

\s

{\bf Claim 1.}  $\overline{V}_3 $ is  of general type.

\s

 Let us consider the
fibration
$$\tilde{f}_3 : \tilde{V}_3 \to \overline{C}_3$$
  obtained by passing to a resolution of singularities   $\tilde{V}_3$ of $\overline{V}_3.$ Since
   $\tilde{f}_3$
   is  a morphism between smooth  projective varieties, the sub-additivity of the Kodaira dimension (see  e.g. \cite[Theorem 2]{Kaw}) implies that the projective surface $\tilde{V}_3$ is of general type, and therefore so must be $\overline{V}_3$.


\s

{\bf Claim 2.} If $\sigma \in \mbox{Gal}(\comp)$ and $\Psi : \bar{\mathscr{M}}_{g,n}^{[3]} \to \bar{\mathscr{M}}_{g,n}$ is the finite degree holomorphic map given by forgetting the level structure, then
$$\Psi \circ \bar{\Phi}_{f_3}^{\sigma}=\Psi \circ \bar{\Phi}_{f_3^{\sigma}}$$
where $\bar{\Phi}_{f_3}:\overline{C}_3 \to \bar{\mathscr{M}}_{g,n}$ (see (\ref{classifcompact}))
is the closure of the classifying map of the family  $f_3:V_3 \to  C_3$ and, of course,
$\bar{\Phi}_{f_3}^{\sigma}$ stands for  $(\bar{\Phi}_{f_3})^{\sigma} .$
In particular, the map $\bar{\Phi}_{f_3}^{\sigma}$ is determined by the map $\Phi_{f_3^{\sigma}}$ up to finitely many options.

\s

If the equality holds in $C_3^{\sigma}$ then it will also hold in $\overline{C_3^{\sigma}}$ by continuity. Now, if $y=\sigma(x) \in C_3^{\sigma}$  then $$\Psi \circ (\bar{\Phi}_{f_3})^{\sigma}(\sigma(x))=[(f_3^{-1}(x))^{\sigma}]=[(f_3^{\sigma})^{-1}(\sigma(x))] =  \Psi \circ \Phi_{f_3^{\sigma}}(\sigma(x)).$$

\s

(3) {(\bf The if part)}

We are  in position to begin the proof of the converse. We are now assuming that
$(\mathscr{B}, \pi)$ is isomorphic to a Bers-Griffiths family $\mathscr{B}_W \to \Delta$ such that $\mathcal{O}_{W}$ is an arithmetic orbifold. By Theorem \ref{theo0}
this implies that    $(\mathscr{B}, \pi)$  is the universal covering of an algebraic family $f : V \to C$ and   hence of the associated three level family $f_3 : V_3 \to C_3$ mentionned above.
What we have to prove is that, because  $\mathcal{O}_{W}$ is  arithmetic, this family is arithmetic.

Now, the inclusion
of the group $\Gamma_3$ that uniformizes the curve $C_3$ (see Section \ref{s12}) in
 $  \Gamma_{W}$ yields a branched covering map between compact Riemann surfaces which we denote by $\beta: \overline{C}_3 \to \overline{R}.$
 Since our hypothesis implies that
  the algebraic curve   $\overline{R}$, the branch values of $\beta$
  and the set
   $\overline{R} \setminus R$ are defined over $\hola$,  arguing as in part $(1),$
   we can assert that the
     covering $\beta: \overline{C}_3 \to \overline{R}$ and hence
 the points in $\overline{C}_3 \setminus C_3$ are also defined over $\hola.$
   It follows that $C_3$ is arithmetic.

   Therefore, the collection $\{C_3^{\sigma}\}, \sigma \in \mbox{Gal}(\comp)$ contains only  finitely many isomorphism classes of curves.
   By Arakelov's Finiteness Theorem (see \cite[p. 207]{ShiIma})
    this implies that
  there are only finitely many non-isotrivial non-equivalent families of finite hyperbolic type $(g,n)$ over $C_3^{\sigma}$, for each $\sigma \in \mbox{Gal}(\comp).$
   In particular, the set of isomorphism classes of algebraic families
  $ \{ f_3^{\sigma}: V_3^{\sigma} \to C_3^{\sigma} :   \sigma \in \mbox{Gal}(\comp)\}$,
   and hence the set of maps
   $ \{ \Phi_{f_3^{\sigma}}  : C_3 \to \mathscr{M}_{g,n}; \sigma \in \mbox{Gal}(\comp)\}$,
  is finite.

   Furthermore, as $\bar{\mathscr{C}}_{g,n}^{[3]}$ is a projective variety defined over $\mathbb{Q},$ we can write
  $$
  (\overline{V}_3)^{\sigma}=(\bar{\Phi}_{f_3}^*\bar{\mathscr{C}}_{g,n}^{[3]})^{\sigma}= (\bar{\Phi}_{f_3}^{\sigma})^* (\bar{\mathscr{C}}_{g,n}^{[3]})^{\sigma}= (\bar{\Phi}_{f_3}^{\sigma})^* \bar{\mathscr{C}}_{g,n}^{[3]}.
  $$
  Now, Claim 2 allows us to state that the collection
\begin{equation}   \label{Finitefamily}
   \{ (\overline{V}_3)^{\sigma} : \sigma \in \mbox{Gal}(\comp)\}
 \end{equation}
   also contains only finitely many isomorphism classes.

Let now
 $$\pi_3^{ \tiny \mbox{nor}} : (\overline{V}_3)^{ \tiny \mbox{nor}} \to \overline{V}_3$$
 be the normalization of
 $ \overline{V}_3 $ (see \cite[p. 25]{Barth}) and
$$\pi_3^{ \tiny \mbox{des}} : Z \to (\overline{V}_3)^{ \tiny \mbox{nor}}$$
 be the
  {\it minimal
 resolution of singularities} of   $(\overline{V}_3)^{ \tiny \mbox{nor}}$
 (see \cite[p. 86]{Barth}).
 We recall that these two
 maps  are
  uniquely determined up to isomorphism.

\s




\s

Now, since the families $V_3$ and
$V_3^+$  (obtained from $V_3$ by filling in the punctures; see Section \ref{s12})
 are subvarieties of
$\overline{V}_3$
containing no singular points, the composed map $$ \lambda := {\pi}_3^{ \tiny \mbox{nor}}\circ  \pi_3^{ \tiny \mbox{des}} : Z \to  \overline{V}_3$$
	induces isomorphisms between $V_3$ and  $U_3 := \lambda^{-1}(V_3)$
		and 	between	$V_3^+$ and
$U_3^+:= \lambda^{-1}(V_3^+).$ In these terms what remains to be shown is that the family  $f_3 \circ \lambda : U_3 \to C_3$
 is arithmetic.

In order to do that we  observe that
the the extended map
$$h:=\overline{f}_3 \circ \lambda : Z \to \overline{C}_3$$
is a fibration  whose generic fiber is a compact Riemann surface of genus at least two.  Therefore, by Arakelov's   Theorem (\cite{Ara},
see also \cite[Theorem 3.1]{Caporaso}),   the collection $$\{h^{\sigma} : Z^{\sigma} \to \overline{C}_3^{\sigma}: \sigma \in \mbox{Gal}(\comp)\}$$
contains finitely many isomorphism classes of fibrations.
Hence,  without lost of generality, we can suppose that $Z,$ $\overline{C}_3$ and $h$ are defined over $\overline{\rac}$ \cite[Criterion 2.2]{Gabino2}.
Thus, our problem is reduced to proving that the Zariski closed
 subset $Z \setminus U_3$ is defined over $\overline{\rac}.$

 Now  $Z\setminus U_3$ agrees with the union of the preimage under $h$ of the finite set $\overline{C}_3 \setminus C_3$
and the image
of the $n$ sections $s_i: C_3 \to U_3 ^+$ (see (\ref{sections})) or rather of their extensions
$\overline{s}_i: \overline{C}_3 \to Z$
(see e.g.
	\cite[Proposition 6.8]{Hartshorne}).
	The first of these two sets is
	defined over $\overline{\rac}$ because both the map $h$ and the set
	$\overline{C}_3 \setminus C_3$ are defined over $\overline{\rac}$. We claim that the second one also is. Indeed, if $\sigma \in \mbox{Gal}(\comp/\overline{\rac})$ then
	$$h \circ s_i^{\sigma}=(h \circ s_i)^{\sigma}=id^{\sigma}=id.$$
This shows that all the maps $\{s_i^{\sigma}\}_{\sigma}$ are sections. But there are only finitely many of them \cite{ShiIma}.  We deduce that each of the sections  $s_i$ is defined  over $\overline{\rac}$ \cite[Criterion 2.2]{Gabino2} which proves    our claim. This bring the proof to an end.

\s


\s

A case in which Theorem \ref{coro}
applies is the following one.
\begin{coro} \label{coro3}

A  Bers-Griffiths family $\mathscr{B}_{W} \to \Delta$  defined by a holomorphic motion whose associated base group $\Gamma_{W}$
is commensurable to a hyperbolic triangle group must be of arithmetic type.
 \end{coro}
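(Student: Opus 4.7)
The plan is to reduce the statement to Belyi's theorem. Recall that a Bers-Griffiths family is of arithmetic type precisely when the orbifold $\mathcal{O}_W = (R; q_1, \ldots, q_m)$, with $R = \Delta/\Gamma_W$, is arithmetic, i.e.\ when $\overline{R}$, the set $\overline{R} \setminus R$ of cusps, and the set of cone points are all defined over $\overline{\mathbb{Q}}$. The commensurability with a hyperbolic triangle group $T$ is precisely the extra input we shall exploit.

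First I would fix a finite-index subgroup $H$ of $\Gamma_W \cap T$ which is torsion-free (by Selberg's lemma) and, by replacing it with the intersection of finitely many $\Gamma_W$-conjugates, normal in $\Gamma_W$; this last operation preserves $H$ having finite index in $T$. Since $\Delta/T$ is a sphere with three distinguished points (cone points or cusps, depending on the parameters of $T$) which can be normalized to $\{0,1,\infty\}$ by a M\"obius transformation, the induced morphism between compactifications
\[
\overline{\Delta/H} \longrightarrow \overline{\Delta/T} \cong \mathbb{P}^1
\]
ramifies only over $\{0,1,\infty\}$ and is therefore a Belyi function. By Belyi's theorem, $\overline{\Delta/H}$ is defined over $\overline{\mathbb{Q}}$ and the preimages of $\{0,1,\infty\}$, in particular the cusps of $\Delta/H$, are $\overline{\mathbb{Q}}$-rational.

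Next I would descend to $\overline{R}$ via the finite group $G := \Gamma_W/H$, which acts by automorphisms on $\overline{\Delta/H}$. Invoking the arithmeticity of automorphisms of hyperbolic curves defined over $\overline{\mathbb{Q}}$ (the same result used in the proof of Theorem \ref{coro}), the quotient $\overline{R} = \overline{\Delta/H}/G$ and the quotient morphism $\overline{\Delta/H} \to \overline{R}$ are both defined over $\overline{\mathbb{Q}}$. The cusps of $R$ are images of $\overline{\mathbb{Q}}$-points (the cusps of $\Delta/H$) under a $\overline{\mathbb{Q}}$-defined morphism, and the cone points $q_i$ are branch values of the $\overline{\mathbb{Q}}$-defined Galois cover $\overline{\Delta/H} \to \overline{R}$; in either case they are $\overline{\mathbb{Q}}$-rational. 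This shows that $\mathcal{O}_W$ is an arithmetic orbifold and the corollary follows.

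The main obstacle I anticipate is the bookkeeping of ramification: one must carefully distinguish which preimages in $\overline{\Delta/H}$ of $\{0,1,\infty\}$ correspond to cusps of $\Delta/H$ versus smooth points ramified over cone points of $T$, and then track both types of distinguished points through the quotient $\overline{\Delta/H} \to \overline{R}$ to identify them with the cusps and cone points of $\mathcal{O}_W$. Arranging normality of $H$ in $\Gamma_W$ (needed to make the last quotient Galois) without destroying the Belyi property is handled uniformly by the normal core construction, since the core remains of finite index in $T$.
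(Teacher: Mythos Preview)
Your proposal is correct and follows essentially the same route as the paper: pass to a torsion-free common finite-index subgroup, replace it by its normal core in $\Gamma_W$, use the induced Belyi map to the triangle quotient $\mathbb{P}^1$ to get arithmeticity of the intermediate curve and its distinguished points, and then descend to $\overline{R}$ via the Galois cover $\overline{\Delta/H}\to\overline{R}$ exactly as in the proof of Theorem~\ref{coro}. The paper is terser at the descent step (it simply points back to the argument in Theorem~\ref{coro}), while you spell out the identification of cusps and cone points more explicitly, but the underlying argument is the same.
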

\begin{proof}
Let us assume that $\Gamma_{W}$ is commensurable to a hyperbolic triangle group $\Gamma_{abc}.$ Let $N$ be a common finite index subgroup of $\Gamma_{W}$ and $\Gamma_{abc}.$ By Selberg's lemma \cite{Selberg} we can suppose that $N$ is torsion free. Moreover, by passing to the normal core subgroup of $N$, we can suppose that $N$ itself is a normal subgroup of $\Gamma_{W}.$ Let us denote by $C$ the Riemann surface $\Delta/N$ and by $\overline{C}$ its compactification. Now, $\Delta/\Gamma_{abc}$ is isomorphic to $\mathbb{P}^1 \setminus \Sigma$ where $\Sigma$ is a subset of $\{\infty, 0, 1\}$ whose cardinality agrees with the number of integers $a,b,c$ that equal $\infty.$ Moreover, the inclusion $N < \Gamma_{abc}$ induces a Belyi function $ \beta: \overline{C} \to \mathbb{P}^1$ such that $C=\overline{C} \setminus \beta^{-1}(\Sigma).$ Thus $\overline{C}$ is an arithmetic curve (see for example \cite{libroge}). As Belyi functions are arithmetic, we see that the points in $\overline{C} \setminus C$ are arithmetic as well. Now by considering the map induced by the inclusion $N < \Gamma_{W}$ we can argue similarly as done in the proof of Theorem \ref{coro} to conclude that $\mathcal{O}_{W}$ is an arithmetic orbifold.
\end{proof}	
	
%

\section{Proof of Theorems \ref{aritmetica} and \ref{aritmeticaK} } \label{s7}

Let us suppose that $S$ contains a Zariski open subset $U$ whose universal cover $\mathscr{B}$ is isomorphic to
 a Bers-Griffiths domain
  of arithmetic type. Then, by Theorem \ref{coro},
  there is a non-isotrivial  arithmetic family $f: V \to C$ which has  $(\mathscr{B},\pi)$ as its universal cover, for some suitable projection map $\pi: \mathscr{B} \to \Delta.$ Our task is to show that $S$ is arithmetic.

  Replacing $f: V \to C$  by  the associated level three family  $f_3: V_3 \to C_3$ as we did in the  proof of Theorem  \ref{coro}, we can assume that the base and the fibers of our family  have genus at least  two   and that
  (the Zariski closure of) $V$ is a surface of general type
   (see Claim 1 in the proof of Proposition \ref{coro}).

Let \[ \xymatrix {
  1 \ar[r] &
  \mathbb{K}_{2} \ar[r] &
  \mathbb{G}_2 \ar[r]^{\,\Theta} &
  \Gamma_2 \ar[r] &
  1
} \]
be the sequence associated to $f$ and let $\mathbb{G}_1$ be a subgroup of $\mbox{Aut}(\mathscr{B})$ such that $U \cong \mathscr{B}/\mathbb{G}_1.$ We denote by $\mathbb{G}_{12}$ the intersection group $\mathbb{G}_1 \cap \mathbb{G}_2,$ by  $V'$ the respective quotient complex surface $\mathscr{B}/\mathbb{G}_{12}$ and by $p_1$ and $p_2$ the holomorphic maps induced by the inclusion $\mathbb{G}_{12} \le \mathbb{G}_1$ and $\mathbb{G}_{12} \le \mathbb{G}_2$ respectively. Thus, we have

$$
\begin{tikzpicture}[node distance=1.3 cm, auto]
  \node (P) {$V'$};
  \node (A) [below of=P, left of=P] {$U$};
  \node (C) [below of=B, right of=P] {$V$};
  \draw[->] (P) to node [swap] {${p}_1$} (A);
  \draw[->] (P) to node {${p}_2$} (C);
\end{tikzpicture}
$$

{\bf Claim.} $p_1: V' \to U$ and $p_2: V' \to V$ are finite degree holomorphic maps between quasiprojective surfaces.

\s

By Shabat's Theorem, the group $\mathbb{G}_2$ has finite index in $\mbox{Aut}(\mathscr{B})$ and therefore $\mathbb{G}_{12}$ has finite index in $\mathbb{G}_1.$ Thus, $p_1$ is a holomorphic map of finite degree whose image $U$ is a quasiprojective surface. Therefore,
Riemann Existence's Theorem (see \cite[p. 227]{Existencia}) allows us to conclude that $V'$ has a unique structure of quasiprojective surface in such a way that $p_1$ is a morphism; this proves the claim for $p_1.$

Furthermore, as $V$ is of general type,
a result of Kobayashi (see \cite[p. 370]{Koba2}) implies that $p_2:V' \to V$ can be seen as a meromorphic map between the Zariski closures of $V'$ and  $V$ in their respective projective spaces. By
Chow's Theorem (see e.g. \cite[Chapter 4]{Dmu}) this in turn  implies that
 $p_2:V'\to V$   is a regular map; and
therefore, there is a Zariski open subset  of $V$  in which each point has a finite number of preimages (see, e.g. \cite[p. 46]{Dmu}). This proves the claim.

\s

Now, the restriction of the homomorphism $\Theta: \mathbb{G}_2 \to \Gamma_2$ to $\mathbb{G}_{12}$ gives rise to a sequence \[ \xymatrix {
  1 \ar[r] &
  \mathbb{K}_{2} \cap \mathbb{G}_{12} \ar[r] &
  \mathbb{G}_{12} \ar[r]&
 \Theta(\mathbb{G}_{12}) \ar[r] &
  1,
} \]
which induces a new non-isotrivial algebraic family of Riemann surfaces; we shall denote this family by $f': V' \to C'.$

As we have done at the beginning of this proof, we now consider the
level three family
 associated to the family
  $f': V' \to C'$ and we denote it by $f'_{3}: V'_{3} \to C'_{3}$. This family fits into
   the following commutative diagram
  $$
\begin{tikzpicture}[node distance=3.0 cm, auto]
  \node (P) {$V$};
  \node (Q) [right of=P] {$V'$};
    \node (R) [right of=Q] {$V'_{3}$};
  \node (A) [below of=P, node distance=1.2 cm] {$C$};
  \node (C) [below of=Q, node distance=1.2 cm] {$C'$};
   \node (S) [below of=R, node distance=1.2 cm] {$C' _{3}$};
  \draw[->] (Q) to node {} (P);
  \draw[->] (C) to node [swap]{$$} (A);
  \draw[->] (P) to node [swap] {$f$} (A);
    \draw[->] (Q) to node {$f'$} (C);
        \draw[->] (S) to node {$$} (C);
        \draw[->] (R) to node {$$} (Q);
        \draw[->] (R) to node {$f'_3$} (S);

\end{tikzpicture}
$$
where the horizontal rows are unbranched covering maps of finite degree.


We now denote by
$\overline{f_{3}'}: \overline{V'_{3}} \to \overline{C'_{3}}$ the fibration that naturally compactifies the family $f'_{3}: V'_{3} \to C'_{3}$ (see   (\ref{familycompactified}) in Section \ref{s12}).
 As we are assuming that $C$ is an arithmetic curve, we can argue as in the  third part of the proof of Theorem  \ref{coro} to conclude that so are the curves $C'$ and
 $C'_3$  and, ultimately, that the family
 $$\{ (\overline{V'_3})^{\sigma} : \sigma \in \mbox{Gal}(\comp) \}$$ contains only finitely many isomorphisms classes  (see (\ref{Finitefamily})).

 We now turn our attention to the  holomorphic map
 $$\overline{V'_3} \supset V'_3 \to V' \to U \subset S.$$
As we are assuming that the projective surface $S$ is of general type, the result by Kobayashi mentioned above tells us that this map is, in fact, the restriction of a rational map  $\overline{V'_3} \dashrightarrow S$ between projective surfaces.

Finally, by a result of Maehara \cite[p. 102]{Maehara} the collection of birational classes of surfaces of general type that can arise as image of a fixed projective variety by a rational map, is finite. Hence, as $\{(\overline{V'_3})^{\sigma}\},$ $\sigma \in \mbox{Gal}(\comp),$ contains finitely many isomorphisms classes and $S$ is of general type, the collection $\{S^{\sigma}\}$ also contains finitely many birational classes and, as $S$ is   minimal, we conclude that $\{S^{\sigma}\}$ contains finitely many isomorphism classes; hence $S$ is arithmetic \cite[Criterion 2.1]{Gabino2}. This ends the proof of Theorem \ref{aritmetica}.

 \

To prove Theorem \ref{aritmeticaK}  we only need to show that if $S$ is arithmetic then the universal cover of the Kodaira family $S \to C$ is of arithmetic type. This can be accomplished as follows. By results of of Howard and Sommese (\cite[Theorem 2]{HowSom}; see also \cite{Koda1})
the number of surjective morphisms from $S$ to any Riemann surface of genus greater or equal to $2$ is finite. Therefore, if  $S$ is arithmetic,  the  criterion for arithmeticity we have been using throughout the paper implies that   the family  $S \to C$ is arithmetic; hence that its universal cover is of arithmetic type.

\

{\bf Acknowledgments.} The authors are grateful to their colleague Andrei Jaikin-Zapirain who generously told them how to prove Lemma \ref{andrei}.

\

\end{document}